\documentclass{article}
\usepackage[table]{xcolor}

\usepackage{microtype}
\usepackage{graphicx}
\usepackage{subcaption}
\usepackage{booktabs} 

\usepackage{hyperref}

\usepackage[preprint]{icml2026}

\usepackage{amsmath}
\usepackage{amssymb}
\usepackage{mathtools}
\usepackage{amsthm}
\usepackage{thmtools}
\usepackage{bbm}
\usepackage{dsfont}
\usepackage{setspace}

\usepackage[capitalize,noabbrev]{cleveref}

\usepackage{relsize}
\usepackage{enumitem}
\usepackage{subcaption}
\usepackage{float}
\usepackage{adjustbox}
\usepackage{arydshln}
\definecolor{cell-gray}{HTML}{d9dad8}
\def\thickhline{\noalign{\hrule height.8pt}}

\theoremstyle{plain}
\newtheorem{theorem}{Theorem}[section]

\newtheorem{lemma}[theorem]{Lemma}

\theoremstyle{definition}
\newtheorem{definition}[theorem]{Definition}

\theoremstyle{remark}
\newtheorem{remark}[theorem]{Remark}

\crefformat{equation}{#2Eq.~#1#3}
\Crefformat{equation}{#2Eq.~#1#3}

\AddToHook{cmd/appendix/before}{\crefalias{section}{appendix}}

\usepackage[textsize=tiny]{todonotes}
\newcommand{\R}{\mathbb{R}}
\newcommand{\Rset}{\mathbb{R}}
\newcommand{\Nset}{\mathbb{N}}

\newcommand{\loi}{\rho}

\newcommand{\dd}{\textrm{d}}
\newcommand{\divergence}{\operatorname{div}}
\renewcommand{\epsilon}{\varepsilon}
\renewcommand{\phi}{\varphi}
\newcommand{\defeq}{\stackrel{\mathsmaller{\mathsf{def}}}{=}}
\newcommand{\Var}{\operatorname{Var}}

\usepackage[normalem]{ulem}

\newcounter{marginNoteCounter}

\newcommand{\mySqBullet}		{\raisebox{0.25em}{{\scriptsize$_\blacksquare$}}}

\newcommand{\inlinetitle}[2]  {\smallskip\noindent\textbf{#1{#2}}}
\renewcommand{\paragraph}[1]  {\inlinetitle{#1}{}~}

\makeatletter
\AddToHook{cmd/appendix/before}{\def\cref@section@alias{appendix}}
\makeatother

\icmltitlerunning{Enhancing Exploration in Global Optimization}

\usepackage{xspace}
\newcommand{\MKV}  {MKV\xspace}  

\begin{document}

\twocolumn[
  \icmltitle{Enhancing Exploration in Global Optimization\\ by Noise Injection in the Probability Measures Space}



  \icmlsetsymbol{equal}{$\dagger$}

  \begin{icmlauthorlist}
    \icmlauthor{Gaëtan Serré}{cebo,equal}
    \icmlauthor{Pierre Germain}{ponts,swiss,equal}
    \icmlauthor{Samuel Gruffaz}{cebo,equal}
    \icmlauthor{Argyris Kalogeratos}{cebo}
  \end{icmlauthorlist}

  \icmlaffiliation{cebo}{Université Paris-Saclay, ENS Paris-Saclay, CNRS, Centre Borelli, 91190 Gif-sur-Yvette, France}
  \icmlaffiliation{ponts}{École des Ponts ParisTech, Marne-la-Vallée, France}
  \icmlaffiliation{swiss}{University of Neuch\^atel, Neuch\^atel, Switzerland}

  \icmlcorrespondingauthor{Pierre Germain}{pierre.germain@unine.ch}
  \icmlcorrespondingauthor{Gaëtan Serré}{gaetan.serre@ens-paris-saclay.fr}
   \icmlcorrespondingauthor{Samuel Gruffaz}{samuel.gruffaz@ens-paris-saclay.fr}

  \icmlkeywords{Optimization, McKean Vlasov, Probability, Sampling}

  \vskip 0.3in
]



\printAffiliationsAndNotice{\icmlEqualContribution}

\begin{abstract}
McKean-Vlasov (\MKV) systems provide a unifying framework for recent state-of-the-art particle-based methods for global optimization. While individual particles follow stochastic trajectories, the probability law evolves deterministically in the mean-field limit, potentially limiting exploration in multimodal landscapes. We introduce two principled approaches to inject noise directly into the probability law dynamics: a perturbative method based on conditional MKV theory, and a geometric approach leveraging tangent space structure. While these approaches are of independent interest, the aim of this work is to apply them to global optimization. Our framework applies generically to any method that can be formulated as a MKV system. Extensive experiments on multimodal objective functions demonstrate that both our noise injection strategies enhance consistently the exploration and convergence across different configurations of dynamics, such as Langevin, Consensus-Based Optimization, and Stein Boltzmann Sampling, providing a versatile toolkit for global optimization.
\end{abstract}

\section{Introduction}
Global optimization aims at finding the global minimum $x^\star$ of a non-convex function $V$ 
\cite{Pinter1996, lee2017finding, Awasthi2024, Houssein2024, el2024taxonomy, Franceschi2024}. While deterministic optimization methods, such as the ubiquitous gradient descent, are efficient for convex problems, they often get trapped in local minima when the function landscape is non-convex. A natural approach is to introduce randomness, which forces the optimization algorithm to explore the search space and escape suboptimal solutions.

In finite dimensions, a successful strategy combines randomness with stochastic dynamics
via the Langevin equation:
\begin{equation} \label{eq:langevin}
\dd X_t = -\nabla V(X_t) \dd t + \sqrt{2\kappa} \dd B_t,
\end{equation}
where $\kappa \geq 0$ controls the noise intensity and $B_t$ is a standard Brownian motion \cite{roberts1996langevin}. Under mild conditions, the distribution $\loi_t$ of $X_t$ converges to the Gibbs measure $\loi_\kappa(\dd x) \propto \exp(-V(x)/\kappa)$ related to potential $V$, which concentrates near global minima for appropriate choice of $\kappa$. Moreover, the evolution $t \mapsto \loi_t$ corresponds to the gradient flow of the free energy functional $\mathcal{F}_\kappa(\loi) = \mathrm{KL}(\loi \, \| \, \loi_\kappa)$ in Wasserstein space. Convergence to $\loi_\kappa$ requires either very large time or high temperature $\kappa$. However, increasing $\kappa$ to speed up convergence has a downside: it spreads $\loi_\kappa$ out rather than concentrating it on 
the set of global minimizers $X^\star$. To adaptively manage this trade-off, a scheduler for gradually decreasing the temperature is necessary to achieve both rapid convergence and concentration of the limiting measure on 
 $X^\star$. Simulated annealing has broadly exemplified this approach.

\paragraph{McKean-Vlasov systems.}
To balance exploration and exploitation more effectively, modern algorithms employ multiple interacting particles with different initializations. These methods, ranging from heuristic approaches to principled gradient flows on probability spaces, can be formulated as instances of McKean-Vlasov (\MKV) systems:
\begin{equation}\label{eq:mckean-vlasov}
\dd X_t^i = b(X_t^i, \hat{\loi}_{X_t}) \dd t + \sigma(X_t^i, \hat{\loi}_{X_t}) \dd B_t^i,
\end{equation}
where $\hat{\loi}_{X_t}$ is the empirical distribution of all $N$ particles at time $t$, and $(B_t^i)_{i \geq 1}$ are independent Brownian motions. The drift $b$ and diffusion $\sigma$ coefficients, which depend on the particle distribution, 
induce particle interactions that promote both exploration and exploitation. This general framework unifies several recently proposed methods: Consensus-Based Optimization (CBO) \cite{pinnau2017cbo} and Stein Boltzmann Sampling (SBS) \cite{serre2025stein} are both special instances of \cref{eq:mckean-vlasov} with specific choices of $b$ and $\sigma$. We refer the reader to \cref{appendix:background} for a detailed introduction to stochastic differential equations and \MKV theory.

In the mean-field limit as $N \to \infty$, the empirical distribution $\hat{\loi}_{X_t}$ converges to a deterministic probability law $\loi_t$, reducing the system to deterministic mean-field dynamics. The convergence of $\loi_t$ to a measure supported on $X^\star$ is then studied (e.g. \cite{koss2024mean, fornasier2024consensus, serre2025stein}), with the Gibbs measure being a canonical example \cite{hwang1980laplace}. However, this deterministic evolution of $\loi_t$, arising from averaging independent Brownian motions, potentially limits exploration at the population level, causing the dynamics to fail at exploring multiple modes of complex landscapes and to get trapped in suboptimal basins.

\begin{figure}[t]
\centering
\begin{subfigure}{0.23\textwidth}
    \includegraphics[width=\textwidth]{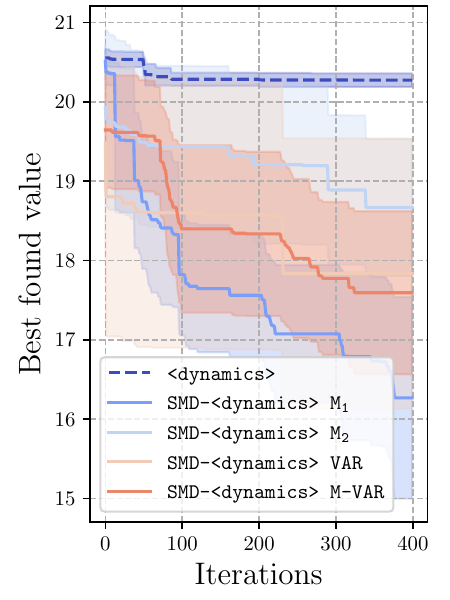}
    \caption{CBO dynamics}
\end{subfigure}
\hfill
\begin{subfigure}{0.23\textwidth}
    \includegraphics[width=\textwidth]{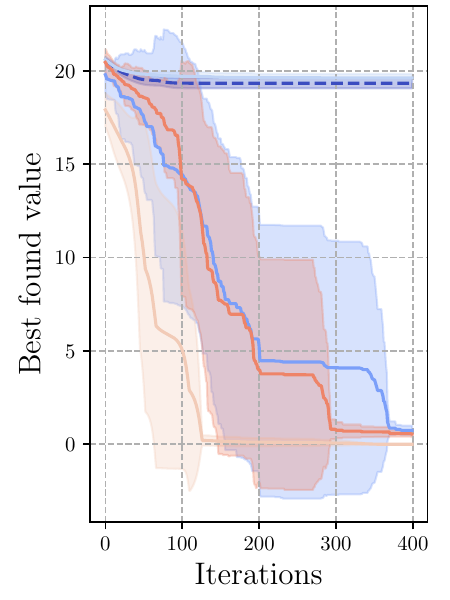}
    \caption{MSGD dynamics}
\end{subfigure}
  \caption{Impact of perturbed observables on CBO (left) and multi-start gradient descent dynamics (right) on Ackley function in dimension $4$ with $30$ particles. The mean (line) and standard deviation (shaded area) of the best found value are reported over $5$ runs.}
  \label{fig:impact_smd}
\end{figure}

\paragraph{Noise as a plug-in component.} To address this limitation, we adopt a distinctive modular perspective: we treat noise as a separate component that can be incorporated to any instance of \MKV dynamics. We develop two principled recipes for injecting noise directly into the probability law evolution itself, that we call $\rho$-\textit{noise} for the sake of conciseness and consistency. The first approach stems from \citet{germain2025stochastic}; it relies on Conditional \MKV theory and adds finite-dimensional common noise designed so that key observables (mean, variance) follow a prescribed stochastic evolution. This correlated motion enables collective transitions and facilitates the exploration of multiple basins of attraction. The second approach, novel to this work, leverages the geometric structure of probability spaces by introducing noise in the tangent space of the probability law manifold. This modular approach allows our noise injection strategies to be applied generically to any 
\MKV-based method, treating noise enhancement as a plug-in module that is independent of the underlying dynamics.

\begin{figure}[t]
\centering
\begin{subfigure}{0.23\textwidth}
    \includegraphics[width=\textwidth]{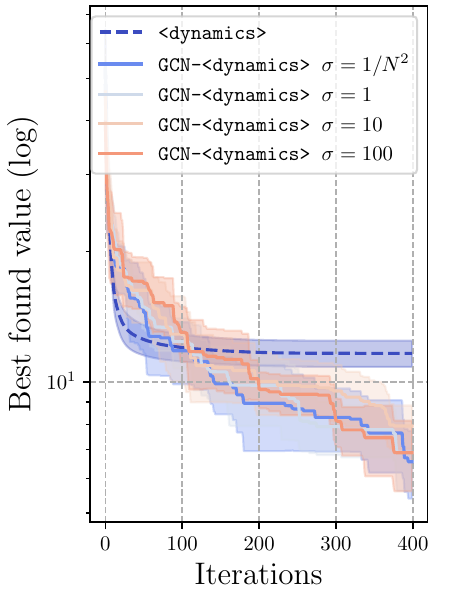}
    \caption{SBS dynamics}
\end{subfigure}
\hfill
\begin{subfigure}{0.23\textwidth}
    \includegraphics[width=\textwidth]{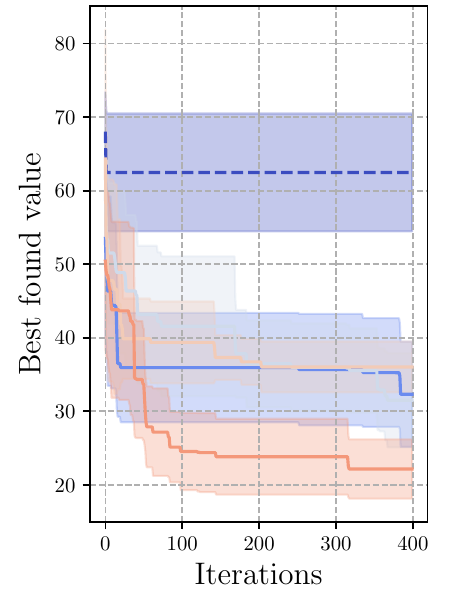}
    \caption{CBO dynamics}
\end{subfigure}
  \caption{Impact of $\sigma$ on the RKHS noise for the SBS dynamics (left) and the CBO dynamics (right) on the Levy function in dimension $10$ with $10$ particles. The mean (line) and standard deviation (shaded area) of the best found value are reported over $5$ runs.}
  \label{fig:impact_gcn}
\end{figure}

\paragraph{Contributions.}
Being the first to use $\rho$-\textit{noise} for global optimization, this paper makes the following main contributions:
\begin{itemize}[topsep=0pt, itemsep=0pt]
  \item[\mySqBullet] \textbf{Two principled approaches for injecting noise} into \MKV probability law dynamics: (i) an application of the perturbation-based method of \citet{germain2025stochastic}, namely \emph{Stochastic Moment Dynamics} (SMD), to $d$-dimensional observables; and (ii) \emph{Geometric Common Noise} (GCN), a novel geometric approach that exploits the tangent-space structure of probability measure spaces to induce Langevin-type dynamics at the level of probability laws.
	  
  \item[\mySqBullet] \textbf{Comprehensive experimental validation} across multiple \MKV dynamics (Langevin, Consensus-Based Optimization, Stein Boltzmann Sampling, and MSGD) and $7$ multimodal benchmark functions, showing that common noise substantially improves exploration and convergence compared to vanilla methods, opening new avenues for both global optimization and sampling applications.
\end{itemize}
\paragraph{Notations.}~The finite set $\{ 1, \dots, N \}$ is denoted by $[N]$. The pushforward of a measure is denoted by \#, i.e. $f\#\nu(\mathsf{A})=\nu(f^{-1}(\mathsf{A}))$ for any measurable function $f: (\mathcal{X},\mathcal{B})\to (\mathcal{Y},\mathcal{A})$, $\mathsf{A}\in \mathcal{A}$ ($\sigma$-algebra on $\mathcal{Y}$) and $\nu$ a measure on $(\mathcal{X},\mathcal{B})$. 
We denote by $k_\sigma^{(d)}(x,y):=\exp(-\|x-y\|_2^2/\sigma)$
the Gaussian kernel for any $(x,y)\in(\R^d)^2$ and $\sigma>0$. $\mathcal{S}_d^{++}$ is the set of $d$-square symmetric positive definite matrices.

\section{Preliminaries}
Recent advances in global optimization have been driven by \MKV-type algorithms, which represent the state-of-the-art for particle-based methods. These approaches leverage interacting particle systems that evolve according to mean-field dynamics, and enable effective exploration of complex multimodal objective functions.

In this work, we consider a modular view over \MKV dynamics, in which noise injection can be handled by a generic plug-in component, regardless the specifics of a method's dynamics. Next, we introduce key \MKV-based algorithms that have demonstrated superior performance on challenging optimization tasks and are candidates for incorporating our noise injection mechanisms. This diverse list also justifies the broad impact of our approach.

\mySqBullet~\textbf{MSGD $(b,\sigma)=(-\nabla V,0)$.} Multi-start gradient descent (MSGD) is a widely used baseline for global optimization, where multiple particles are initialized randomly and updated via gradient descent.

\mySqBullet~\textbf{Langevin $(b,\sigma)=(-\nabla V,\sqrt{2\kappa}I_d),\, \kappa >0$.} The Langevin dynamics is a stochastic optimization method that combines gradient descent with isotropic Brownian noise. 


\mySqBullet~\textbf{SBS $(b,\sigma)=(b^{\text{svgd}},0)$.} Stein Boltzmann Sampling (SBS) \cite{serre2025stein} is a particle-based optimization method that leverages repulsive forces $\nabla_x k(\cdot,x)$ between particles to enhance exploration, the drift term is defined as follows
\begin{equation}
\label{eq:sbs_drift}
    b^{\text{svgd}}(\cdot,\rho) 
\defeq  \mathbb{E}_{x\sim\loi}
\Big[
-k(\cdot,x)\,\nabla V (x)
+ \kappa \nabla_x k(\cdot,x) \Big]
\end{equation} where $\kappa>0$ is a temperature parameter and $k$ is a positive definite kernel, typically chosen as a Gaussian kernel. 
It is based on Stein Variational Gradient Descent (SVGD), which is originally a sampling method derived from the gradient flow of the free energy functional $\mathcal{F}_\kappa$ in the Stein geometry framework \cite{duncan2023geometry}. In \cref{sec:GCN}, we build on this observation to design a $\rho$-\textit{noise} that we believe to be canonical.

\mySqBullet~\textbf{CBO} $(b,\sigma)=(b^{\text{CBO}},\sigma^{\text{CBO}})$\textbf{.} Consensus-Based Optimization (CBO) \cite{pinnau2017cbo} is a popular particle-based optimization method that drives particles towards a consensus point computed as a weighted average of the best-performing particles. The resulting \MKV system has drift and diffusion terms given respectively by:%
\begin{align*}
b^{\text{CBO}}(X^i_t, \hat{\loi}_{X_t}) &\defeq -\lambda(X^i_t - v_f ) H^\epsilon(f (X^i_t) - f (v_f))
\\
\sigma^{\text{CBO}}(X^i_t, \hat{\loi}_{X_t}) &\defeq \gamma \|X^i_t - v_f\|_2 I_d,
\end{align*}%
where $H^\epsilon$ is a smooth approximation of the Heaviside function, $\lambda, \gamma > 0$ are hyperparameters, and $v_f$ is the consensus point defined as:
$$
v_f \defeq \frac{\sum_{i=1}^N X_t^i e^{-\alpha f(X_t^i)}}{\sum_{i=1}^N e^{-\alpha f(X_t^i)}},
$$
where $\alpha > 0$ is a weighting parameter corresponding to the inverse temperature in a Gibbs measure. While anisotropic noise variants of CBO have been proposed \cite{carrillo2021cbo} that could extend this list, we focus on the original isotropic formulation for straightforward compatibility with the \MKV framework.

\section{Methodology}
First, we present an ad hoc finite-dimensional $\rho$-\textit{noise}, called SMD, introduced in \cite{germain2025stochastic} (\Cref{sec:SMD}). We then introduce an infinite-dimensional $\rho$-\textit{noise}, which we claim is canonical when the MKV equation is derived from a gradient flow (\Cref{sec:GCN}). 
\subsection{Noise through moment perturbation}\label{sec:SMD}

\paragraph{Mean-field optimization and particle systems.}
Particle-based optimization methods \cite{pinnau2017cbo, serre2025stein, roberts1996langevin} can be viewed as evolutions in the space of probability measures, approximated by interacting particle systems. When particles are driven only by independent noise, stochasticity vanishes in the mean-field limit due to propagation of chaos, often resulting in poor exploration and convergence to local minima \cite{tugaut2014phase, monmarche2025local}.

\paragraph{Common noise and conditional \MKV theory.}
A natural remedy is to introduce a \emph{common noise} that acts on all particles simultaneously. Unlike independent noise, common noise persists in the mean-field limit and induces `genuinely' stochastic dynamics for the limiting distribution. This setting, known as \emph{conditional McKean–Vlasov theory} \cite{carmona_delarue_book2}, has recently been used to improve exploration \cite{delarue2025exploration}, escape local minima \cite{delarue2024ergodicity, maillet2023longtime_commonnoise}, and design structured perturbations \cite{delarue2025rearranged}. It is particularly motivated by two failure modes: particle collapse (vanishing variance) and collective trapping in suboptimal basins.

\paragraph{Macroscopic perturbations via stochastic moments.}
To address these issues, one can perturb directly macroscopic observables (e.g. the empirical mean or variance). This is achieved by augmenting the mean-field dynamics with a suitably designed common noise so that the induced evolution of these moments remains stochastic in the mean-field limit. The resulting $\rho$-noise, termed \emph{Stochastic Moment Dynamics} (SMD), was introduced in \citet{germain2025stochastic} for general observables, together with a theoretical analysis.
Here, we provide a self-contained presentation of the method, with a focus on practical implementation.

\paragraph{Macroscopic observable of interest $F$. }
Let $N \ge 1$, starting from the baseline mean-field particle dynamics \cref{eq:mckean-vlasov}, choose a macroscopic observable $F:\mathcal{P}(\R^d)\rightarrow \R^p$ from:
\begin{itemize}[topsep=0pt, itemsep=0pt]
    \item[\mySqBullet] Mean: $F_{\text{mean}}(\rho)\defeq \int_{\R^d}xd\rho(x)$
    \item[\mySqBullet] Second-order moment: $F_{\text{s.m}}(\rho) \defeq \left( \int_{\R}x^2d\rho_j(x) \right)_{j\in [d]} $
    \item[\mySqBullet] Variance: $F_{\text{var}} \defeq \left( \Var(\rho_j) \right)_{j\in [d]} $ 
    \item[\mySqBullet] Mean + Variance : $[F_{\text{mean}}(\rho), F_{\text{var}}( \rho)]$.
\end{itemize}
Above, $\rho \in \mathcal{P}(\R^d)$, $\rho_j $ is the $j$-th marginal of $\rho$, and $p\in \{d,2d\}$ depends on the choice of the observable. 

\paragraph{SMD General Recipe. }
The method consists in adding a stochastic perturbation in \cref{eq:mckean-vlasov} according to the following equation:
\begin{equation}
\begin{aligned}
\label{eq:particle_smd_continuous_icml}
\mathrm{d}X_t^i
={}
 &b(X_t^i, \hat{\loi}_{X_t})\,\mathrm{d}t
+
\sigma(X_t^i, \hat{\loi}_{X_t})\,\mathrm{d}B_t^i \\
&+ \beta \, \big[  
\tilde{b}(X_t^i, \hat{\loi}_{X_t})\,\mathrm{d}t
+
\tilde\sigma(X_t^i, \hat{\loi}_{X_t})\,\mathrm{d}B_t\big].
\end{aligned}
\end{equation}

where $\beta\geq0$ is an intensity parameter and $B_t$ is a $p$-dimensional Brownian motion acting on all particles. 
The resulting dynamics \cref{eq:particle_smd_continuous_icml} can thus be viewed as a combination of two effects: a deterministic mean-field drift—associated to $b$ and $\sigma$ (driving the system toward minimizers of the objective function), and a
stochastic perturbation (associated to $\tilde{b}$ and $\tilde{\sigma}$) providing macroscopic exploration. The intensity $\beta$ of this additional exploration perturbation can be tuned in close analogy with the role of temperature in the finite-dimensional Langevin dynamics \cref{eq:langevin}.

 Consider independently the additional perturbation driven by $\tilde b$ and $\tilde \sigma$:
\begin{equation}
\label{eq:forcing}
\mathrm{d}Y_t^i
=
\tilde{b}(Y_t^i, \hat{\loi}_{Y_t})\,\mathrm{d}t
+
\tilde\sigma(Y_t^i, \hat{\loi}_{Y_t})\,\mathrm{d}B_t .
\end{equation}
The coefficients $\tilde{b}$ and $\tilde \sigma$ are defined so that the chosen macroscopic quantity of $\hat{\loi}_{Y_t}$ , independently from the number $N$ of particles in the system, solves the stochastic differential equation given by some coefficients $a\in \R^p$ and $s\in \R^{p\times p}$:
\begin{equation} \label{eq:controlemoment}
\mathrm{d}F(\hat{\loi}_{Y_t})
=
a(F(\hat{\loi}_{Y_t}))\,\mathrm{d}t
+
s(F(\hat{\loi}_{Y_t}))\,\mathrm{d}B_t .
\end{equation}

The expression of $\tilde{b}$ and $\tilde \sigma$ is given by an explicit formula from \citet{germain2025stochastic} and only depends on $F$, $a$ and $s$. Therefore, given a choice of a macroscopic quantity $F$, a critical aspect of the method lies in the choice of the coefficients $a$ and $s$. When $F$ is positive, $a,s$ are chosen such that \cref{eq:controlemoment} is a Bessel process in such way that $F(\hat{\loi}_{Y_t})$ stay positive, i.e.
$$a_{+}^\delta(z)= \left(\frac{\delta- 1 / 2 }{z_i} \right)_{i\in [d]}  , \quad s_+^\delta(z)=I_{d}$$
where $\delta\geq2$ is a parameter whose role will be explained later. The formula of $(\tilde b,\tilde \sigma)$ are given for any choice of perturbation $F\in \{F_{\text{mean}},F_{\text{s.m}},F_{\text{var}},[F_{\text{mean}},F_{\text{var}}] \}$ in the following lemma.

\begin{lemma}
\noindent\textbf{SMD-Mean $F=F_{\text{mean}}$, $(a,s)=(0_d,I_d)$.}
\begin{equation} \label{eq:mean}
    \tilde b(x,\loi) \defeq 0  , \; \; \tilde \sigma(x,\loi) \defeq I_d,    
\end{equation}

\noindent\textbf{SMD Second-order moment $F=F_{\text{s.m}}$, $(a,s)=(a_+^\delta,s_+^\delta)$.}
\begin{equation} \label{eq:moment2}
\begin{aligned}
\tilde b(x,\loi) &\defeq \left( \delta-\frac{3}{2} \right)\, \left[ \frac{x}{4m_2(\rho)^2} \right],\\
\tilde \sigma(x,\loi) &\defeq  \text{Diag} \left[ \frac{x}{2m_2(\rho)} \right] .
\end{aligned}
\end{equation}

\noindent\textbf{SMD Variance $F=F_{\text{var}}$, $(a,s)=(a_+^\delta,s_+^\delta)$. }
\begin{equation} \label{eq:variance}
\begin{aligned}
\tilde b(x,\loi) &\defeq \left( \delta-\frac{3}{2} \right)\, \left[ \frac{x-m(\rho)}{4\Var(\rho)^2} \right],\\
\tilde \sigma(x,\loi) &\defeq  \text{Diag} \left[ \frac{x-m(\rho)}{2\Var(\rho)} \right] .
\end{aligned}
\end{equation}

\noindent\textbf{SMD (Mean + variance) $F=[F_{\text{mean}},F_{\text{var}}]$, $(a,s)=((0_d,a_+^\delta),(I_d,s_+^\delta))$.} 
\begin{equation} \label{eq:meanvariance}
\begin{aligned}
\tilde \sigma(x,\loi) &\defeq \left( I_d \, , \, \text{Diag} \left[ \frac{x-m(\rho)}{2\Var(\rho)} \right]  \right),
\end{aligned}
\end{equation}
where $\tilde b(x,\loi)$ is the same of the variance perturbation case.
\end{lemma}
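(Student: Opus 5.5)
The plan is a direct verification by It\^o's formula applied to the empirical observable. For each choice of $F$ we take the system \cref{eq:forcing} with the proposed $(\tilde b,\tilde\sigma)$ and compute $\mathrm{d}F(\hat{\loi}_{Y_t})$, where $F(\hat\loi_{Y_t})$ is an explicit polynomial in the coordinates $Y^i_{t,j}$. We then check that the result has the form \cref{eq:controlemoment} with the prescribed $(a,s)$, and that this holds for every $N$. Since the common Brownian motion $B_t$ is the same for all particles and enters diagonally, the coordinates decouple and everything reduces to scalar computations for each $j\in[d]$.

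\textbf{SMD-Mean.} With $\tilde b=0$ and $\tilde\sigma=I_d$, every particle satisfies $\mathrm{d}Y^i_t=\mathrm{d}B_t$. Hence $F_{\text{mean}}(\hat\loi_{Y_t})=\frac1N\sum_i Y^i_t$ satisfies $\mathrm{d}F=\mathrm{d}B_t$, which is \cref{eq:controlemoment} with $(a,s)=(0_d,I_d)$.

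\textbf{Second moment.} Write $m_{2,j}=\frac1N\sum_i (Y^i_j)^2$. It\^o gives
$\mathrm{d}(Y^i_j)^2 = 2Y^i_j\,\tilde b_j\,\mathrm{d}t + 2Y^i_j\,\tilde\sigma_{jj}\,\mathrm{d}B_{t,j} + \tilde\sigma_{jj}^2\,\mathrm{d}t$.
We substitute $\tilde\sigma_{jj}=Y^i_j/(2m_{2,j})$ and $\tilde b_j=(\delta-\tfrac32)Y^i_j/(4m_{2,j}^2)$ and average over $i$. The identity $\frac1N\sum_i (Y^i_j)^2=m_{2,j}$ then collapses the martingale part to exactly $\mathrm{d}B_{t,j}$, so $s=I_d$ regardless of $N$. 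The drift collapses to a multiple of $1/m_{2,j}$, namely a constant combining the $\tilde b$ contribution and the It\^o correction $1/(4m_{2,j})$.

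\textbf{Variance.} Set $m_j=\frac1N\sum_i Y^i_j$. First we show $m$ is constant: both $\tilde b$ and $\tilde\sigma$ are proportional to $x-m(\loi)$, and this averages to zero over the empirical measure, so $\mathrm{d}m_j=0$. Then $\Var_j=\frac1N\sum_i (Y^i_j-m_j)^2$, and the same It\^o computation as for the second moment, with $Y^i_j$ replaced by $Y^i_j-m_j$ and $m_{2,j}$ by $\Var_j$, gives unit diffusion and a drift proportional to $1/\Var_j$.

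\textbf{Mean + variance.} The Brownian motion is now $2d$-dimensional, $(B^{(1)},B^{(2)})$. The first block adds the same increment $\mathrm{d}B^{(1)}$ to every particle, so it moves $m$ by $\mathrm{d}B^{(1)}$ and leaves the centered coordinates $Y^i-m$ unchanged. The variance block averages to zero in $m$, as in the variance case. Because $Y^i-m$ carries no $B^{(1)}$ component, there is no cross quadratic-variation term. Hence $\mathrm{d}m=\mathrm{d}B^{(1)}$ and $\Var$ evolves exactly as in the variance case, giving the block-diagonal $(a,s)$.

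\textbf{Main obstacle.} The step I expect to be delicate is matching the drift constant. A quick pass of the computation above gives the drift $\big(\tfrac{\delta-3/2}{2}+\tfrac14\big)/z=\tfrac{\delta-1}{2z}$, rather than the stated $(\delta-\tfrac12)/z$. So the proof must either locate a compensating factor (for instance a factor of $2$ in the intended normalization of $\tilde b$, or of $\delta$ in $a_+^\delta$) or correct the constant in $\tilde b$ to match \citet{germain2025stochastic}. I would recompute this carefully against the general formula in \citet{germain2025stochastic}, which expresses $\tilde b$ through the first and second linear derivatives of $F$ and the prescribed $(a,s)$, and adjust the constant so that the two agree. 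The diffusion part $s=I_d$ is robust and does not depend on this bookkeeping.
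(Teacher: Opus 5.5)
Your Itô verification is correct, and it takes a genuinely different route from the paper's.

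\textbf{The two routes.} The paper \emph{derives} the coefficients by plugging $f(x)=x$ and $f(x)=(x_j^2)_{j}$ into the general formula \cref{eq:formulecoeffs} of Germain et al. For the two variance cases it first applies Itô to $F_{\text{mean}}^2$. This turns the prescribed $(F_{\text{mean}},F_{\text{var}})$-dynamics into dynamics for $(F_{\text{mean}},F_{\text{s.m}})$ with $f=(x,x^2)$ and lower-triangular $s=\bigl(\begin{smallmatrix}\epsilon&0\\2\epsilon z_1&1\end{smallmatrix}\bigr)$. It then applies the formula and recovers the pure-variance case by setting $\epsilon=0$.

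You instead \emph{verify} the given coefficients by Itô's formula on the empirical observable. You treat the variance cases structurally: centered coefficients freeze the mean, and the $I_d$ block is a common translation that leaves $Y^i-m$ and $\Var$ unchanged and produces no cross-variation. Your argument is self-contained, exact for every $N$, and avoids the change of variables. The paper's route explains where the coefficients come from and applies to any moment observable; yours only certifies the ones given.

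\textbf{The drift constant.} The constant you flag is an error in the statement, not a gap in your argument, and there is no compensating factor. Your drift $\frac{\delta-3/2}{2z}+\frac{1}{4z}=\frac{\delta-1}{2z}$ is exactly the drift of a $\delta$-dimensional Bessel process. This reading is also the one under which the positivity threshold $\delta\ge 2$ is the standard one.

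The general formula gives the same thing. Per coordinate, with $f(x)=x^2$, it yields $\tilde b=\frac{x}{2m_2}\bigl(a(m_2)-\frac{1}{4m_2}\bigr)$. This equals the stated $(\delta-\frac32)\frac{x}{4m_2^2}$ if and only if $a(z)=\frac{\delta-1}{2z}$. The $(x,x^2)$ computation gives the same condition, with $x-m$ and $\Var$ in place of $x$ and $m_2$.

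The paper's appendix is itself inconsistent on this point. It writes $\frac{\delta-1}{F_{\text{var}}}$ and then $\frac{\delta+1}{2(z_2-z_1^2)}$, and neither reproduces the stated $\tilde b$. So $a_+^\delta(z)=(\delta-\frac12)/z$ should be read as $\frac{\delta-1}{2z}$.

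The right fix is to keep $\tilde b$ and correct $a_+^\delta$. Your alternative, adjusting $\tilde b$ to fit $(\delta-\frac12)/z$, would give $(2\delta-\frac32)\frac{x}{4m_2^2}$ and contradict the lemma you are proving. With $a_+^\delta(z)=\frac{\delta-1}{2z}$, your proof is complete.
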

The derivation of the coefficients $\tilde b$ and $\tilde \sigma$ is detailed in \cref{appendix:derivationcoeffs}, which were given only for dimension $d=1$ in \citet{germain2025stochastic}. The empirical efficiency of these methods is tested on a benchmark of multi-modal functions across multiple MKV dynamics in \cref{sec:numerics}. Depending on the nature of the objective function to minimize, one should adapt its choice of perturbed macroscopic observable to maximize the efficiency of the method, even though Mean+Var will be recommended as a practical default.

\paragraph{Positivity constraints and Bessel dynamics.}
For a given macroscopic observable $F$, poorly chosen coefficients $a$ and $s$ may cause the dynamics~\cref{eq:forcing} and \cref{eq:particle_smd_continuous_icml} to blow up in finite time. This issue arises because many macroscopic observables (e.g., variances) take values in a restricted subset of $\mathbb{R}^p$. As long as the stochastic perturbation of $F(\hat{\loi}_{Y_t})$ remains within its admissible domain, the dynamics are well defined; however, if positivity constraints are violated, the coefficients become singular and the evolution breaks down.

To enforce positivity when perturbing second-order moments or variances, the coefficients $a$ and $s$ are chosen as $d$ independent $\delta$-Bessel processes \citep[Chapter~11]{revuz2013continuous}, which preserve positivity for $\delta \geq 2$. These correspond to the SDE~\cref{eq:controlemoment} with $(a,s)=(a_+^\delta,s_+^\delta)$, understood component-wise. In contrast, since the mean is unconstrained, its perturbation is taken to be standard Brownian motion ($a=0$, $s=I_d$).

The coefficients defined in \cref{eq:moment2}, \cref{eq:variance}, and \cref{eq:meanvariance}, are not globally Lipschitz in $\rho$, reflecting potential singular behavior as second-order moments or variances approach zero. While this could in principle lead to finite-time explosion, \citet[Section~6]{germain2025stochastic} show that the stochastic perturbation alone is non-explosive. It is therefore conjectured that the full dynamics~\cref{eq:particle_smd_continuous_icml}, combining this perturbation with a broad class of mean-field optimization algorithms, is also non-explosive under mild conditions. Since no explosion was observed in our numerical experiments, we do not consider regularized dynamics in the remainder of this paper.

\subsection{Noise using tangent space}
\label{sec:GCN}




The second approach relies on the geometric structure of the considered probability law space related to a dynamic. We introduce an infinite-dimensional noise that we claim to be canonical by injecting noise in tangent spaces related to the curve $t \mapsto \loi_t$. For the presentation of the approach, we follow the case-study of Stein variational gradient descent (SVGD). As Langevin, SVGD is a sampling method that can be used for global optimization \cite{serre2025stein}.
The recipe is summarized at the end of the section, the simulation scheme is detailed in \cref{appendix:GCN_scheme} and all the proofs are given in \cref{appendix:proof}.

\paragraph{SVGD mean-field dynamics. }
Given a kernel $k:(x,y)\in \mathbb{R}^d\times \Rset^d\mapsto k(x,y) $, typically a Gaussian kernel $k=k_\sigma^{(d)}$, the SVGD's $N$-particle dynamic is a \MKV dynamics using $b^{\text{svgd}}$ \cref{eq:sbs_drift} as drift and with zero diffusion.

When $N\to \infty$, and under mild conditions, $\hat{\loi}_{X_t} \to \loi^{\text{svgd}}_t$ following the mean-field dynamics:
\begin{equation}
\begin{aligned}
\label{eq:gradient_flow_svgd}
    &\partial_t \loi_t^{\text{svgd}} =-\divergence\left(b^{\text{svgd}}(\cdot,\rho_t^{\text{svgd}}) \loi_t^{\text{svgd}}\right)
\end{aligned}
\end{equation}
As Langevin dynamics, when $t\to \infty $, $\rho_t^{\text{svgd}}$ is proved to converge to the Gibbs measure $\mu(x)\propto \exp(-V(x)/\kappa)$ related to potential $V$. In the following, we set the temperature parameter $\kappa=1$.


In the following, we detail the structure of \cref{eq:gradient_flow_svgd} to 
clarify the $\rho$-\textit{noise}.

\paragraph{Gradient flow on Wasserstein space.}
\cref{eq:gradient_flow_svgd} is in fact a gradient flow related to a specific geometry on the space of probability law \cite{duncan2023geometry}. To give an intuition on this interpretation, note that a sum of two probability measures $(\loi,\nu)$ is not a probability measure $\loi+\nu$ since $(\loi+\nu)(\Rset^d)=2$. Instead, 
due to mass conservation, the following property holds: if a curve $t\in [0,1] \to \loi_t\in \mathcal{P}_2=\{\loi\in \mathcal{P}(\Rset^d): \mathbb{E}_{\loi}\left[|X|^2 \right]<\infty \}$ is absolutely continuous, then it satisfies the continuity equation in the sense of distribution \citep[Theorem 8.3.1]{ambrosio2005gradient}, i.e. there exists $v_t\in L^2(\loi_t) $ for any $t\in[0,1]$ such that
\begin{equation}
\label{eq:continuity_equation}
    \partial_t \loi_t+\divergence(\loi_tv_t)=0 \, .
\end{equation}
This means that an infinitesimal displacement around $\loi_t$ has the form $\divergence(\loi_tv_t)$. 

Note that the discretization of \cref{eq:continuity_equation} can be derived using:
\begin{equation}
\label{eq:discretization_eq_con}
\begin{aligned}
    &\hat{\loi}_{(n+1)\epsilon}=(id+\epsilon v_{n\epsilon})\#\hat{\loi}_{n\epsilon},\\
    \text{since}\qquad &\lim_{\epsilon\to 0} \frac{(id+v_t)\#\loi_{t}-\loi_t}{\epsilon}=\divergence(\loi_tv_t) .
\end{aligned}
\end{equation}
For these reasons, the functional space where $v_t$ belongs can be identified as a tangent space using Riemannian geometry vocabulary, and by using the pushforward operation `$\#$' instead of the addition `$+$'. This statement is made rigorous in \citet[Theorem 8.5.1]{ambrosio2005gradient}. 

In general, the tangent space is as subspace of:
$$
L^2(\loi_t)=\left\{ f:\R^d\mapsto \R^d\, \text{measurable}:\int \|f\|^2\dd \loi_t<\infty \right\}.
$$
In the case of SVGD dynamics, the functional space is related to a subspace of a Reproducible Kernel Hilbert Space (RKHS) $\mathcal{H}_d\subset L^2(\loi_t)$ generated by $K_{SVGD}=kI_d$ \cite{duncan2023geometry}.




Therefore, to \emph{inject canonical noise} in a trajectory of probability laws $(\loi_t)$, we only have to inject canonical noise in the tangent space where $v_t$ belongs according to its structure,
\begin{align}
\label{eq:discretization_eq_con_prob}
    &\hat{\loi}_{(n+1)\epsilon}=(id+\epsilon v_{n\epsilon}+\sqrt{\epsilon}G_n)\#\hat{\loi}_{n\epsilon},\quad G_n \sim \mathcal{G}.
\end{align}
The question is then, how to add canonical noise in the tangent space such that its time increments $(G_n)_{n\in \mathbb{N}}\sim \mathcal{G}$ act as an additive noise in equation \cref{eq:discretization_eq_con_prob}? Can we characterize their law $\mathcal{G}$, as well as the law $\rho^*_\cdot$ of the resulting continuous time noising process? 


\paragraph{Canonical noise in RKHS. }
 Brownian motion, or any Gaussian process, is a random variable in an Hilbert space of functions typed as $\Rset\to \Rset$. Its natural generalization to Hilbert spaces of functions $\R^d\rightarrow \R^d$ is called a $d$-dimensional space Gaussian process, 
or Gaussian random field \cite{estrade2025gaussian,mackay1998introduction}. In the following, we focus on RKHS-valued Gaussian random fields.


\begin{definition}[$\mathcal{H}_d$-Gaussian random field]
A random variable $G:\Omega\to\mathcal H_d$ is called a (centered) $\mathcal{H}_d$-Gaussian random field if, for every $f\in\mathcal H_d$, the real-valued random variable
$\langle G,f\rangle_{\mathcal H_d}$ is a centered Gaussian random variable on $\mathbb R$.
Moreover, if $\mathbb E\|G\|_{\mathcal H_d}^2<\infty$, then
its law is characterized by a covariance kernel $K:\Rset^d\times \Rset^d \mapsto \mathcal{S}_d^{++}\subset \Rset^{d\times d} $ and we denote $G\sim \mathcal{G}(K)$.  
More precisely, for any $X=(x_i)_{i\in[N]}\in (\Rset^d)^N$, $(G(x_i))_{i\in [N]}$ is a $\R^{Nd}$ Gaussian vector, whose covariance matrix $\bar K(X)=(K_{i,j}(X))_{i,j\in [N]} \in \R^{Nd\times Nd}$ is given by the blocs:
\begin{equation*}
K_{i,j}(X)=K(x_i,x_j)=\mathbb{E}[G(x_i) G(x_j)^T] \in \R^{d\times d}.
\end{equation*}


\end{definition}

Heuristically, a centered Gaussian process $G$ with covariance kernel $\bar K$ can be interpreted
as a standard Gaussian on the RKHS $\mathcal H_d$, formally associated with density
$\exp(-\tfrac12\|G\|_{\mathcal H_d}^2)$. This interpretation cannot be made rigorous in
$\mathcal H_d$, since infinite-dimensional Hilbert spaces do not admit a Lebesgue reference
measure. A rigorous meaning is obtained by restricting $G$ to finite-dimensional subspaces
$\mathcal H_d^X$ generated by kernel evaluations:
$$
    \mathcal{H}_d^X=\operatorname{span}(K(x_i,\cdot)e_j,\, i,j\in[N]\times [d])\subset \mathcal{H}_d \, .
$$
by considering its orthogonal projection $\pi_XG$ into $\mathcal{H}_d^X$. 

\begin{lemma} \label{lemma:reduced_N}
Let $X=(x_1,\dots,x_N)$ with $x_i\ne x_j$ if $i\ne j$, and let $G\sim \mathcal{G}(K_{SVGD})$. Then the projection
$\pi_X G\in\mathcal H_d^X$ admits a density $\mathbb P(\pi_X G\in \mathrm df)$ with respect to the Lebesgue measure induced by the
RKHS inner product, given by
\[
\mathbb P(\pi_X G\in \mathrm df)\propto
\exp\!\big(-\tfrac12\|f\|_{\mathcal H_d}^2\big).
\]
Equivalently, $G(X)=(G(x_1),\dots,G(x_N))\in\mathbb R^{Nd}$ is a centered
Gaussian vector with density proportional to
$\exp(-\tfrac12\, g^\top \bar K(X)^{-1} g)$ with respect to the usual Lebesgue measure on $\R^{Nd}$.
\end{lemma}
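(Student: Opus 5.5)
The plan is to reduce everything to finite-dimensional linear algebra on $\mathcal H_d^X$, using the reproducing property and the covariance structure from the definition of $\mathcal{G}(K)$. Write $K=K_{SVGD}=kI_d$ and let $\phi_{i,j}=K(x_i,\cdot)e_j$ for $(i,j)\in[N]\times[d]$. The reproducing property gives $\langle f,\phi_{i,j}\rangle_{\mathcal H_d}=f(x_i)^\top e_j$. The Gram matrix of this family is therefore $\langle \phi_{i,j},\phi_{i',j'}\rangle=e_j^\top K(x_i,x_{i'})e_{j'}$, which is exactly $\bar K(X)$.

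\textbf{Step 1 (nondegeneracy).} Since $k$ is a Gaussian kernel, it is strictly positive definite. With $x_i$ distinct, the matrix $(k(x_i,x_{i'}))_{i,i'}$ is positive definite, and hence $\bar K(X)=(k(x_i,x_{i'}))\otimes I_d$ is positive definite. So $\{\phi_{i,j}\}$ is a basis of $\mathcal H_d^X$, and $\dim\mathcal H_d^X=Nd$.

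\textbf{Step 2 (law of the projection).} Write $\pi_XG=\sum_{i,j}c_{i,j}\phi_{i,j}$ with coefficient vector $c\in\R^{Nd}$. Testing against $\phi_{i',j'}$ gives $\bar K(X)c=g$, where $g=G(X)=(\langle G,\phi_{i,j}\rangle)$. The vector $G(X)$ is a centered Gaussian vector with covariance $\bar K(X)$, by the definition of $\mathcal{G}(K)$. Its density is then $\propto\exp(-\tfrac12 g^\top\bar K(X)^{-1}g)$ because $\bar K(X)$ is invertible by Step 1; this is the second claim. For the first claim, take $f=\sum c_{i,j}\phi_{i,j}\in\mathcal H_d^X$. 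Then $\|f\|_{\mathcal H_d}^2=c^\top\bar K(X)c=g^\top\bar K(X)^{-1}g$ with $g=\bar K(X)c$, which is the evaluation vector $f(X)$.

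\textbf{Step 3 (change of variables).} The map $f\mapsto f(X)$ is a linear isomorphism $\mathcal H_d^X\to\R^{Nd}$. The Lebesgue measure induced by the RKHS inner product, obtained from an orthonormal basis, is therefore a constant multiple of the pullback of Lebesgue measure on $\R^{Nd}$. The Jacobian is the constant $\det\bar K(X)^{-1/2}$, since $\bar K(X)^{-1/2}g$ gives orthonormal coordinates. Pulling back the Gaussian density of $G(X)$ then gives $\mathbb P(\pi_XG\in\dd f)\propto\exp(-\tfrac12\|f\|^2_{\mathcal H_d})$. This step also shows the two formulations are equivalent.

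\textbf{Main obstacle.} The delicate point is the interpretation of ``$G\sim\mathcal G(K)$ with values in $\mathcal H_d$''. An $\mathcal H_d$-valued Gaussian with covariance kernel exactly $K$ does not exist in infinite dimensions, since it would need a trace-class identity. I will therefore avoid any global statement and use only the finite-dimensional fact given by the definition: the evaluation vector $G(X)$ is Gaussian with covariance $\bar K(X)$. I will also use that $\pi_XG$ depends on $G$ only through $G(X)$, which holds because $\pi_XG$ is determined by the inner products with the $\phi_{i,j}$. With that, the argument is pure linear algebra plus strict positive definiteness of the Gaussian kernel.
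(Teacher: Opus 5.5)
Your proposal is correct and follows essentially the same route as the paper's proof. The shared steps are: the evaluation map $f\mapsto f(X)$ is a linear isomorphism $\mathcal H_d^X\to\R^{Nd}$ by strict positive definiteness; the identity $\|f\|_{\mathcal H_d}^2=c^\top\bar K(X)c=g^\top\bar K(X)^{-1}g$ makes it an isometry; and a constant-Jacobian change of variables carries the $\mathcal N(0,\bar K(X))$ density of $G(X)$ to $\pi_XG$. Your explicit derivation of $\bar K(X)c=G(X)$ from the projection's orthogonality conditions, and your caveat that $G$ cannot literally be $\mathcal H_d$-valued (so $\pi_XG$ should be read as the kernel interpolant of $G(X)$), make precise two points the paper leaves implicit.
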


All theoretical details are deferred to \cref{appendix:lemmedensite}.

 
\cref{lemma:reduced_N} justifies that $G\sim\mathcal{G}(K_{SVGD})$ is Gaussian standard in $\mathcal{H}_d$, and thus is considered canonical.

The following remark justifies that $d$-dimensional space Gaussian process is an infinite dimensional noise.
\begin{remark}
\label{remark:decomposition_gcn}
\onehalfspacing
When the kernel $k(x,y)=k_\sigma^{(d)}(x,y)$ is Gaussian, $G \defeq (G_i)_{i\in[d]}\sim \mathcal{G}(K_{\text{SVGD}})$ can be simulated as follows \citep[Eq (1.1.11)]{lototsky2017stochastic}
$$
G_i=\sum_{\vec{n}\in \Nset^d} \lambda_{\vec{n}} \phi_{\vec{n}}(\cdot) \xi_{\vec{n}}^i , \; \xi_{\vec{n}}^i\overset{i.i.d}{\sim}\mathcal{N}(0,1),\,  \vec{n}\in \mathbb{N}^d
$$
where $(\Phi_{\vec{n}}=\sqrt{\lambda_{\vec{n}}}\phi_{\vec{n}})_{\vec{n\in \Nset^d}} $ is an orthonomal basis of the RKHS related to $k_\sigma^{(d)}$ \cite{berlinet2011reproducing}.
\end{remark}
The take-home message is the following : if we aim to inject canonical noise in the SVGD, we should add time-dependent noise in \cref{eq:continuity_equation}, whose time increments are i.i.d $\mathcal{H}_d$-Gaussian random field of covariance kernel $K_{SVGD}$, i.e. $\mathcal{G}=\mathcal{G}(\beta^2 K_{SVGD})$ in \cref{eq:discretization_eq_con_prob} where $\beta>0$ is a temperature parameter. 

\paragraph{Continuous Langevin-type dynamics. }
Our goal is to study the time-continuous dynamics related to in \cref{eq:discretization_eq_con_prob} when injecting canonical noise $\mathcal{G}=\mathcal{G}(\beta^2 K_{SVGD})$, 
to get a Langevin-type dynamic on the space of probability laws, corresponding to a gradient flow \cref{eq:continuity_equation} with a canonical $\rho$-\textit{noise}. Since the continuity equation is a partial differential equation, its stochastic
counterpart naturally takes the form of a stochastic partial differential equation (SPDE).
Accordingly, the noise must be modeled as a Gaussian space-time process
$(G_t)_{t\ge0}\sim\mathcal{TG}(K_{SVGD})$ with increments
$G_{t+\varepsilon}-G_t\sim\mathcal{G}(\varepsilon K_{SVGD})$), which can be viewed as the infinite-dimensional analogue of Brownian motion. In the SPDE, this leads to the appearance of the infinitesimal noise term $dG_t$, interpreted as Gaussian white noise, that is, the distributional time derivative of $G_t$ \citep[Eq.~(1.1.5)]{lototsky2017stochastic}; see \cref{appendix:random_generalized_function} for an introduction to these objects.

Following \cref{remark:decomposition_gcn}, in the case of SVGD, the infinitesimal variation of $(G_t)_{t\geq0}=(G^i_t)_{i\in [d],t\geq0}\sim \mathcal{TG}(K_{\text{SVGD}})$ can be represented as follows,
    \begin{equation*}
    \dd G_i(t,\cdot)=\sum_{\vec{n}\in \Nset^d} \lambda_{\vec{n}} \phi_{\vec{n}}(\cdot) \dd \xi_{\vec{n},t}^i ,
\end{equation*}
where $(\xi_{\vec{n},t}^i)_{t\geq 0}$ are independent Brownian motions \citep[Eq (1.1.10)]{lototsky2017stochastic}.


To define a valid SPDE, the idea is first to consider the simplified case where $\rho_0=\hat\rho_0$ is the empirical measure of a particle system. Because the dynamic of the system is fully described by the motion of the $N$ particles, the SPDE can be rewritten as an SDE governing the evolution the particles. The initial SPDE is then obtained as a limit of the considered SDEs as $N\to \infty$. The SDE describing the motions of $N$ particles is derived from \cref{eq:noise_svgd}.

\begin{lemma}
\label{lemma:SDE_to_consider}
    The SDE being the time-continuous/discretized in space version of \cref{eq:discretization_eq_con_prob} with $\mathcal{G}=\mathcal{G}(\beta^2 K_{SVGD})$ and $\beta>0$ is
\begin{equation} \label{eq:noise_svgd}
    \dd \bar{X}_t= b^{\text{svgd}}\left(X_t,\loi_t^N \right)\dd t+\beta \bar{K}^{1/2}(X_t) \dd B_t^{Nd},
\end{equation}
with $\rho_t^N=\frac{1}{N}\sum_{i=1}^N\delta_{X_t^i}$ is the empirical measure of the particle system, $(B_t^{Nd})_{t\geq0}$ is a $Nd$ dimensional Brownian motion and $\bar{K}(X) = K(X)\otimes I_d \in \Rset^{dN\times dN} $,  $K(X)\defeq (k(X^i,X^j))_{i,j\in [N]}$. 
\end{lemma}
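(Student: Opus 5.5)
Starting from \cref{eq:discretization_eq_con_prob} with an empirical measure $\hat\loi_{n\epsilon}=\frac1N\sum_i\delta_{X^i_{n\epsilon}}$, I would first make the particle-level update explicit. The pushforward of a sum of Diracs under a map $T$ is the sum of Diracs at the images, so $(id+\epsilon v_{n\epsilon}+\sqrt\epsilon G_n)\#\hat\loi_{n\epsilon}=\frac1N\sum_i\delta_{T(X^i_{n\epsilon})}$. The scheme is therefore equivalent to the particle recursion
\[
X^i_{(n+1)\epsilon}=X^i_{n\epsilon}+\epsilon\, v_{n\epsilon}(X^i_{n\epsilon})+\sqrt\epsilon\, G_n(X^i_{n\epsilon}),
\]
where $v_{n\epsilon}=b^{\text{svgd}}(\cdot,\loi^N_{n\epsilon})$ is the SVGD velocity field. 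Only the values of the random field at the $N$ current particle positions enter the update. This spatial discretization reduces the infinite-dimensional noise to a finite-dimensional one.

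Second, I would identify the law of the stacked increment $\bar G_n\defeq(G_n(X^1_{n\epsilon}),\dots,G_n(X^N_{n\epsilon}))\in\R^{Nd}$. I condition on $X_{n\epsilon}$ and use that $G_n$ is independent of the past. By the definition of an $\mathcal H_d$-Gaussian random field with covariance $\beta^2K_{SVGD}=\beta^2 kI_d$, $\bar G_n$ is a centered Gaussian vector. Its $(i,j)$ block is $\beta^2 k(X^i,X^j)I_d$, so its covariance is $\beta^2 K(X)\otimes I_d=\beta^2\bar K(X)$. \Cref{lemma:reduced_N} gives the same conclusion in density form. Hence $\bar G_n \overset{d}{=} \beta\bar K^{1/2}(X_{n\epsilon})\xi_n$ with $\xi_n\sim\mathcal N(0,I_{Nd})$ i.i.d. and independent of $X_{n\epsilon}$. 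The square root is well defined because $\bar K$ is symmetric positive semidefinite. It is in fact positive definite when the particles are distinct and $k$ is a Gaussian kernel, which makes $\bar K^{1/2}$ smooth in $X$.

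Third, I would write $\sqrt\epsilon\,\xi_n=B^{Nd}_{(n+1)\epsilon}-B^{Nd}_{n\epsilon}$. The recursion then becomes exactly the Euler--Maruyama scheme
\[
\bar X_{(n+1)\epsilon}=\bar X_{n\epsilon}+\epsilon\, b^{\text{svgd}}(X_{n\epsilon},\loi^N_{n\epsilon})+\beta\bar K^{1/2}(X_{n\epsilon})\big(B^{Nd}_{(n+1)\epsilon}-B^{Nd}_{n\epsilon}\big)
\]
for \cref{eq:noise_svgd}. To conclude, I would invoke the standard convergence of Euler--Maruyama as $\epsilon\to0$ to the strong solution of the SDE. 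This requires locally Lipschitz coefficients: $b^{\text{svgd}}$ is smooth for smooth $k$ and $V$, and $X\mapsto\bar K^{1/2}(X)$ is locally Lipschitz on the set of distinct configurations, where $\bar K$ is positive definite. A localization or stopping argument handles the possibly non-global Lipschitz behavior.

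The main obstacle is the regularity of $\bar K^{1/2}$ near collisions, where $\bar K$ degenerates. There, the matrix square root is only H\"older-$1/2$ continuous. I would handle this in one of two ways. One option is to work up to the first collision time and argue that collisions do not occur, or are irrelevant in law. The other, more robust option is to replace $\bar K^{1/2}$ by any measurable factor $\Sigma$ with $\Sigma\Sigma^\top=\bar K$: the martingale problem, and hence the law, of the limit depends only on $\bar K$, and $\bar K$ itself is smooth in $X$.
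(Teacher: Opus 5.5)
Your proposal is correct and follows essentially the same route as the paper: you reduce the pushforward scheme to a particle recursion, identify the stacked field values as a centered Gaussian vector with covariance $\beta^2\bar K(X)$, write that vector as $\beta\bar K^{1/2}\xi_n$, and recognize the Euler--Maruyama scheme of \cref{eq:noise_svgd} (you also correctly keep the $X^i_{n\epsilon}$ term that the paper's displayed recursion drops). Your discussion of Euler--Maruyama convergence goes beyond the paper's formal identification, and the collision issue is milder than you fear: for the Gaussian kernel, $\bar K$ is smooth with bounded second derivatives, so its symmetric square root is globally Lipschitz by the classical Freidlin/Stroock--Varadhan lemma.
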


In the following, the term:
\begin{equation} \label{eq:G_t_N_bar}
    \bar{K}(X_t)^{1/2}\dd B_t^{Nd}
\end{equation}
will be referred as \textit{Geometrical Common Noise} (GCN). This diffusion coefficient is the consequence of the addition of the $\sqrt{\epsilon} G_n$ term in  \cref{eq:discretization_eq_con_prob}.

Surprisingly, when $\beta=\beta_N \defeq \sqrt{2}/\sqrt{N}$ the dynamic \cref{eq:noise_svgd} was already studied in the literature under the name of Stochastic Langevin with repulsive force; see \cite{gallego2018stochastic,nusken2021stein} and \citep[Section 3]{duncan2023geometry}.  In this setting the noise vanishes at the large $N$ limit and the evolution of $\rho_t^N$ becomes deterministic. In this paper, under the scaling $\beta_N=\beta>0$, the noise \cref{eq:G_t_N_bar} doesn't vanish when $N\rightarrow \infty$, thus GCN is a $\rho$-\textit{noise}. 

In the following theorem $t\to \rho_t^*$ is the noised dynamic $t\to \rho_t^{\text{svgd noisy}}$ that we look for.
\begin{theorem}
\label{thm:limit-theorem}
    If the family $\left\{\rho^N_\cdot: N \in \mathbb{N}\right\}$ possesses a limit point in $\mathcal{P}(C[0, T])$ denoted by $\rho^*_\cdot$, then $\rho^*_\cdot$ is solution in the weak sense of  
    \begin{equation}
    \begin{aligned}
    \label{eq:spde_noisy}
    \partial_t \rho_t^* +&\divergence \left(\left(b(\cdot,\rho_t^*) +\beta \dd G_t)\right)\rho_t^* \right)  \\
    & +\beta^2 \Delta_t (k(\cdot,\cdot) \rho_t^*) =0,\quad G\sim \mathcal{TG}(K_{\textup{SVGD}})
  \end{aligned}  
  \end{equation}
Moreover, in case where $k(x,x)=1$ for any $x\in \Rset^d$, which is verified when $K_{\text{SVGD}}=k_\sigma^{(d)}I_d$,
$\rho_t^*$ can be approximated using that
\begin{equation}
\label{eq:schem_info_svgd}
\begin{cases}
    \rho_{t+\epsilon}^* \approx (id+T_t)\# \rho_{t}^*, \;  g_t\sim\mathcal{G}(K_{\textup{SVGD}}) \\
    T_t \ \ \ \, = \epsilon b(\cdot,\rho_t^*)+\sqrt{\epsilon}\beta g_t\, .
\end{cases}.
\end{equation}
\end{theorem}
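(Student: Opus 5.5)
The plan is to apply Itô's formula to the particle system \cref{eq:noise_svgd} at the level of the empirical measure, identify the martingale term as a functional of a common space-time Gaussian field, and then pass to the limit along the converging subsequence.

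\textbf{Step 1: martingale decomposition for $\rho^N_t$.} The key preliminary observation is that $\bar K^{1/2}(X_t)\dd B^{Nd}_t$ has the same law as the evaluation $(\dd G_t(X^1_t),\dots,\dd G_t(X^N_t))$ of a single field $G\sim\mathcal{TG}(K_{\text{SVGD}})$ at the particle positions, since both are centered Gaussian with covariance $\bar K(X_t)\dd t$ (definition of $\mathcal{H}_d$-Gaussian fields and \cref{lemma:reduced_N}). Working with this representation, fix a test function $\varphi\in C_c^\infty(\R^d)$ and apply Itô's formula to $\langle \varphi,\rho^N_t\rangle=\frac1N\sum_i\varphi(X^i_t)$. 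This gives
\[
\dd\langle\varphi,\rho^N_t\rangle=\Big\langle \nabla\varphi\cdot b(\cdot,\rho^N_t)+\tfrac{\beta^2}{2}\,k(\cdot,\cdot)\Delta\varphi,\;\rho^N_t\Big\rangle\dd t+\beta\,\dd M^N_t(\varphi).
\]
The second-order term uses only the diagonal blocks $k(X^i,X^i)I_d$, and $\dd M^N_t(\varphi)=\frac1N\sum_i\nabla\varphi(X^i_t)\cdot \dd G_t(X^i_t)=\langle\nabla\varphi\cdot\dd G_t,\rho^N_t\rangle$. The point to stress is that the quadratic variation of $M^N$,
\[
\int\!\!\int \nabla\varphi(x)^\top K(x,y)\nabla\varphi(y)\,\rho^N_t(\dd x)\rho^N_t(\dd y)\,\dd t,
\]
does \emph{not} vanish as $N\to\infty$, because the noise is common. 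This is exactly why the limit equation is stochastic. Integrating by parts gives the weak form of \cref{eq:spde_noisy}. The diffusion term $\frac{\beta^2}{2}\Delta(k(x,x)\rho)$ matches the stated $\beta^2\Delta_t(k(\cdot,\cdot)\rho)$ up to the constant convention in the statement, and I would check that convention explicitly.

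\textbf{Step 2: passage to the limit.} Take a subsequence with $\rho^N_\cdot\to\rho^*_\cdot$ in law. By Skorokhod representation, jointly with the common field $G$, assume almost sure convergence. The drift term converges because $b^{\text{svgd}}(x,\rho)$ is an integral of smooth bounded kernel terms against $\rho$, assuming $\nabla V$ is bounded and Lipschitz so that the map is weakly continuous. The $\Delta$ term converges because $k(x,x)\Delta\varphi(x)$ is bounded and continuous. For the stochastic integral I would use a standard martingale-problem characterization: show that
\[
M_t(\varphi)=\langle\varphi,\rho^*_t\rangle-\langle\varphi,\rho^*_0\rangle-\int_0^t(\cdots)\,\dd s
\]
is a continuous martingale with the limiting quadratic variation above and the matching cross-variation with $G$, using uniform $L^2$ bounds to get uniform integrability. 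A martingale representation theorem then identifies $M(\varphi)$ with $\int_0^t\langle\nabla\varphi\cdot\dd G_s,\rho^*_s\rangle$. This identification of the limit stochastic integral, including that the same field $G$ persists in the limit, is the main obstacle. What I would try first is to expand $G$ in the basis of \cref{remark:decomposition_gcn}, truncate to finitely many modes, and pass to the limit in each mode (each is a finite-dimensional Itô integral, so standard results of Kurtz--Protter type apply), controlling the tail uniformly through $\sum_{\vec n}\lambda_{\vec n}<\infty$.

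\textbf{Step 3: the scheme \cref{eq:schem_info_svgd}.} With $k(x,x)=1$, the Itô correction is simply $\frac{\beta^2}{2}\Delta\rho$. For a test function $\varphi$, expand $\varphi(x+\epsilon b+\sqrt\epsilon\beta g_t(x))$ to second order and take expectation over $g_t\sim\mathcal{G}(K_{\text{SVGD}})$. Since $\mathbb{E}[g_t(x)g_t(x)^\top]=k(x,x)I_d=I_d$, this gives the generator from Step 1 up to $o(\epsilon)$. The fluctuation $\sqrt\epsilon\beta\langle\nabla\varphi\cdot g_t,\rho\rangle$ reproduces the increment of $M$. This yields an Euler--Maruyama consistency statement for \cref{eq:schem_info_svgd}, which is the sense in which ``$\approx$'' is meant.
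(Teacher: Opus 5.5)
Your proposal follows the paper's proof essentially step for step: Itô's formula for $\frac1N\sum_i\phi(X_t^i)$, the block-diagonal Hessian so that only $k(X^i_t,X^i_t)$ enters the second-order term, a common-noise martingale whose quadratic variation $\iint\nabla\phi(y)\cdot\nabla\phi(z)\,k(y,z)\,\dd\rho_s(y)\,\dd\rho_s(z)$ survives as $N\to\infty$ and is identified with $\int_0^t\langle\nabla\phi\cdot\dd G_s,\rho^*_s\rangle$, and, for the scheme, the fact that the noise increment over a step of length $\epsilon$ is $\sqrt{\epsilon}\,g$ with $g\sim\mathcal{G}(K_{\textup{SVGD}})$; your Skorokhod, cross-variation and mode-truncation argument is a more careful version of the paper's bare assertion that the limit martingale is determined by its quadratic variation. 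The coefficient discrepancy you flag is a genuine slip in the statement, not a convention you need to match: $\frac{\beta^2}{2}$ is the correct Itô coefficient (the paper's Itô step drops the $\frac12$), and your weak form corresponds to $\partial_t\rho^*_t+\divergence\big((b(\cdot,\rho^*_t)+\beta\,\dd G_t)\rho^*_t\big)=\frac{\beta^2}{2}\Delta\big(k(\cdot,\cdot)\rho^*_t\big)$, so the displayed equation also has the wrong sign on the Laplacian term relative to the paper's own weak formulation.
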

In practice, only the dynamics of $N$ particles is simulated using \cref{eq:noise_svgd}, whose the Euler-Maruyama integration scheme is given with details in \cref{appendix:GCN_scheme}. In our experiments the kernel $k$ is Gaussian. The scheme given in \cref{eq:schem_info_svgd} is here only to 
give more intuition on $\dd G$ in \cref{eq:spde_noisy}, which becomes $\beta g_t$ in \cref{eq:schem_info_svgd}, accounting the stochastic limit-effect of the infinitesimal $d$-dimensional space Gaussian process noise that was injected in \cref{eq:noise_svgd}.

By seeing $t\mapsto \rho_t^{\text{svgd}}$ as a gradient flow on a Wasserstein space \cite{duncan2023geometry}, the dynamic $t\mapsto \rho_t^*$ given in \cref{thm:limit-theorem} can be seen as a Langevin dynamic on the space of probability law related to the metric given in \citet[Definition 7]{duncan2023geometry}, that is why we consider the noise \cref{eq:G_t_N_bar} as canonical compared to SMD (\cref{sec:SMD}).

\paragraph{Generalization of the approach.}
In general, the kernel related to SVGD and the Gaussian process covariance function could be chosen differently, and the noise given by \cref{eq:G_t_N_bar} can be added to any \MKV dynamics, the limit-effect will keep the same structure than for SVGD. However, GCN cannot be considered as canonical if there is no link to the gradient flow structure of the original dynamic. 

In any case, the choice of the kernel should encode the regularity prior on the objective function. In experiments, the effect of the bandwidth $\sigma$ is studied when the covariance function is Gaussian ($k_\sigma I_d$). 

\paragraph{Link between GCN and SMD-mean. }
 The following Theorem shows that GCN is an interpolation between an infinite-dimensional white noise and SMD-mean, which is a $d$-dimensional noise, depending on the bandwidth parameter $\sigma$ when $k=k_\sigma^{(d)}$.
\begin{theorem}
\label{theorem:link}
    If the covariance function $k_\sigma^{(d)}$ is Gaussian: 
    \begin{itemize}[topsep=0pt, itemsep=0pt]
        \item If the parameter $\sigma \to \infty$, GCN is SMD-mean.
        \item If the parameter $\sigma \to 0$, GCN is a white noise.
    \end{itemize}
\end{theorem}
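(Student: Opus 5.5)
The plan is to work directly at the level of the $N$-particle SDE of \cref{lemma:SDE_to_consider}. There the GCN term is $\beta \bar K(X_t)^{1/2}\dd B_t^{Nd}$, with $\bar K(X)=K(X)\otimes I_d$ and $K(X)_{ij}=\exp(-\|X^i-X^j\|_2^2/\sigma)$. Since the law of a Gaussian increment is determined by its covariance, I will identify the limits of the noise through the limits of the covariance matrix $\bar K(X_t)$, evaluated at a fixed configuration with pairwise distinct particles (a.s. distinct along the dynamics). Convergence of the SDE coefficients then gives convergence of the noise term in law; the matrix square root is continuous on positive semidefinite matrices, and its products with $\dd B$ have matching quadratic variations. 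I will also check the statements at the level of the random field $G\sim\mathcal G(k_\sigma I_d)$ itself, through its finite-dimensional marginals given in the definition of $\mathcal H_d$-Gaussian random fields.

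\textbf{Case $\sigma\to\infty$.} For fixed distinct points, each entry $k_\sigma(X^i,X^j)\to 1$, so $K(X)\to \mathbf 1\mathbf 1^\top$ and $\bar K(X)\to (\mathbf 1\mathbf 1^\top)\otimes I_d$. This matrix is positive semidefinite of rank $d$ and equals $AA^\top$ with $A=\mathbf 1\otimes I_d\in\R^{Nd\times d}$. Hence $\bar K^{1/2}\dd B^{Nd}$ has the same law as $A\,\dd W_t$ for a $d$-dimensional Brownian motion $W$, that is, the common increment $\dd W_t$ is added to every particle. This is exactly \cref{eq:particle_smd_continuous_icml} with the SMD-mean coefficients \cref{eq:mean}, $\tilde b=0$, $\tilde\sigma=I_d$ (up to the constant normalisation relating $\beta\bar K^{1/2}$ and $\beta N^{-1/2}\mathbf 1\mathbf 1^\top$). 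Equivalently, at the field level, the covariance $\mathbb E[G(x)G(y)^\top]\to I_d$ for all $x,y$, so the Gaussian field converges in finite-dimensional law to a spatially constant random vector $G\equiv \xi\sim\mathcal N(0,I_d)$. This is the pushforward by a translation, i.e. SMD-mean.

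\textbf{Case $\sigma\to 0$.} Off-diagonal entries $\exp(-\|X^i-X^j\|^2/\sigma)\to 0$ while diagonal entries stay equal to $1$, so $\bar K(X)\to I_{Nd}$ and the GCN term becomes $\beta\,\dd B_t^{Nd}$, i.e. independent Brownian motions per particle. Spatially, the covariance tends to $\mathbb 1_{x=y} I_d$: the field is uncorrelated at distinct points, which is the sense in which I will call it white noise. This is the infinite-dimensional extreme, in which the particle-level noise decorrelates completely.

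The main obstacle is making the word ``is'' precise. For $\sigma\to\infty$ the square root degenerates to a rank-$d$ limit, and I must argue that the square-root map is continuous at this singular point; it is, since $M\mapsto M^{1/2}$ is continuous on the PSD cone. I must also then pass from convergence of the diffusion coefficients to convergence in law of the solutions, using standard stability of SDEs with locally Lipschitz, locally uniformly convergent coefficients. For $\sigma\to0$, the pointwise covariance limit $\mathbb 1_{x=y}$ is not continuous, so the limit is not a genuine $\mathcal H_d$-valued field. I therefore plan to state that case only in the sense of finite-dimensional distributions, or for the particle system, rather than as convergence in $\mathcal H_d$.
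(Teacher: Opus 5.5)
Your treatment of $\sigma\to\infty$ is the paper's argument. The paper also shows $\bar K(X)\to\bar J=J\otimes I_d$, uses $\bar J^2=N\bar J$ to get $\bar K(X)^{1/2}\to\bar J/\sqrt N$, and reads off that every particle receives the same increment $N^{-1/2}\sum_i\dd B^i_t$, i.e.\ SMD-mean. Your factorisation $\bar J=AA^\top$ with $A^\top A=NI_d$ is the same computation, and it shows your parenthetical about a normalising constant is unnecessary: $N^{-1/2}\sum_i B^i$ is already a standard $d$-dimensional Brownian motion, so $\beta$ transfers exactly.

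For $\sigma\to0$ you take a genuinely different route. The paper stays at the field level and asserts $k_\sigma\to\delta_0$, hence $\mathbb E[G(x)G(y)^\top]=\delta_0(x-y)$, the generalized white noise of its appendix on random generalized functions. You instead compute the pointwise limit $\mathbbm{1}_{x=y}I_d$ and, for the particle system, $\bar K(X)\to I_{Nd}$, so GCN degenerates into independent Brownian motions, one per particle.

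For the kernel defined in the Notations, your identification is the accurate one. We have $k_\sigma(x,x)=1$ for every $\sigma$ while $\int k_\sigma(x,y)\,\dd y=(\pi\sigma)^{d/2}\to0$, so $k_\sigma$ tends to the indicator pointwise and to $0$ as a distribution. A $\delta_0$ limit requires rescaling by $(\pi\sigma)^{-d/2}$, which would make the diagonal blocks of $\bar K$, and hence the particle-level noise, blow up.

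The paper's route buys the intended link with white noise as a generalized random function. Yours buys a correct, concrete statement about the system that is actually simulated. You should say explicitly, though, that your ``white noise'' is a family of independent Gaussian vectors indexed by points, in the sense of finite-dimensional distributions. It is not the object $W$ with $\mathbb E[W(\varphi)W(\psi)]=\langle\varphi,\psi\rangle_{L^2}$.

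One technical caveat concerns your SDE-stability step in this case. The convergence $\bar K_\sigma(X)^{1/2}\to I_{Nd}$ is not locally uniform near the collision set $\{X^i=X^j\}$, where the off-diagonal block equals $I_d$ for every $\sigma$. Your claim that particles stay a.s.\ distinct also fails for the limiting system when $d=1$. So the ``locally uniformly convergent coefficients'' argument needs an extra step, e.g.\ showing that the limiting diffusion spends zero time on that set.
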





\newcommand{\tabsize}{{0.8\linewidth}}
\begin{table*}[t]
  \caption{Comparison between vanilla dynamics and their SMD and GCN variants on $7$ multimodal benchmark functions in dimension $20$ using $150$ particles over $300$ iterations. For each dynamic, the average best result is reported. The average rank over all benchmarks along with the ECR is reported at the bottom of each table. The maximum p-value of Mann-Whitney U tests is reported in the last column. The best results are in bold and cells where the Mann-Whitney U test is significant at level $0.05$ are highlighted in gray.}
  \label{tab:multimodal-results}
    \centering
    \begin{minipage}{0.05\textwidth}\vspace{0pt}
      \rotatebox{90}{\footnotesize CBO dynamics}
      \vspace*{-1em}
    \end{minipage}%
    \begin{minipage}{\tabsize}\vspace{0pt}
      \begin{adjustbox}{width=\linewidth}
        \begin{tabular}{l|cccccc|c}
        \thickhline
        \textbf{Benchmark} & \textbf{CBO} & \textbf{SMD-CBO Mean} & \textbf{SMD-CBO} $\mathbf{M^2}$ & \textbf{SMD-CBO Var} & \textbf{SMD-CBO Mean+Var} & \textbf{GCN-CBO} & \textbf{p-value} \\
        \hline
        Ackley & $21.097$ & $20.615$ & $21.083$ & $21.088$ & $20.610$ & $\cellcolor{cell-gray} \mathbf{20.591}$ & $0.000$ \\
        Deb N.1 & $\cellcolor{cell-gray} \mathbf{-1.000}$ & $\cellcolor{cell-gray} \mathbf{-1.000}$ & $\cellcolor{cell-gray} \mathbf{-1.000}$ & $\cellcolor{cell-gray} \mathbf{-1.000}$ & $\cellcolor{cell-gray} \mathbf{-1.000}$ & $\cellcolor{cell-gray} \mathbf{-1.000}$ & $0.000$ \\
        Griewank & $20.279$ & $19.562$ & $19.657$ & $19.683$ & $20.073$ & $\mathbf{19.506}$ & $0.237$ \\
        Levy & $102.351$ & $86.857$ & $95.816$ & $95.945$ & $\cellcolor{cell-gray} \mathbf{81.912}$ & $85.146$ & $0.000$ \\
        Rastrigin & $253.358$ & $238.943$ & $250.021$ & $250.463$ & $240.406$ & $\cellcolor{cell-gray} \mathbf{238.768}$ & $0.000$ \\
        Schwefel & $5157.189$ & $5127.111$ & $5139.564$ & $5146.530$ & $5199.557$ & $\mathbf{5111.855}$ & $0.387$ \\
        Styblinski-Tang & $-19.513$ & $\cellcolor{cell-gray} \mathbf{-20.798}$ & $-20.006$ & $-19.881$ & $-20.091$ & $-20.191$ & $0.000$ \\
        \cdashline{1-8}[2pt/1pt]
        Avg Rank & $5.14$ & $2.00$ & $3.29$ & $4.14$ & $3.00$ & $\mathbf{1.29}$  \\
        ECR & $1.0648$ & $\mathbf{1.0097}$ & $1.0424$ & $1.0443$ & $1.0132$ & $1.0104$ \\
        \cline{1-7}
        \end{tabular}
      \end{adjustbox}
    \end{minipage}
    \vspace*{0.8em}

    \begin{minipage}{0.05\textwidth}\vspace{0pt}
      \rotatebox{90}{\footnotesize SBS dynamics}
      \vspace*{-1em}
    \end{minipage}%
    \begin{minipage}{\tabsize}\vspace{0pt}
      \begin{adjustbox}{width=\linewidth}
        \begin{tabular}{l|cccccc|c}
        \thickhline
        \textbf{Benchmark} & \textbf{SBS} & \textbf{SMD-SBS Mean} & \textbf{SMD-SBS} $\mathbf{M^2}$ & \textbf{SMD-SBS Var} & \textbf{SMD-SBS Mean+Var} & \textbf{GCN-SBS} & \textbf{p-value} \\
        \hline
        Ackley & $21.126$ & $20.851$ & $21.112$ & $21.102$ & $20.825$ & $\cellcolor{cell-gray} \mathbf{20.810}$ & $0.000$ \\
        Deb N.1 & $-0.961$ & $-0.835$ & $-1.000$ & $-1.000$ & $\cellcolor{cell-gray} \mathbf{-1.000}$ & $-0.909$ & $0.000$ \\
        Griewank & $19.959$ & $19.686$ & $20.646$ & $20.218$ & $19.953$ & $\mathbf{19.128}$ & $0.173$ \\
        Levy & $\cellcolor{cell-gray} \mathbf{24.036}$ & $47.680$ & $25.827$ & $25.815$ & $46.601$ & $45.794$ & $0.005$ \\
        Rastrigin & $86.442$ & $185.464$ & $83.496$ & $\cellcolor{cell-gray} \mathbf{80.785}$ & $190.917$ & $181.370$ & $0.006$ \\
        Schwefel & $5986.360$ & $5984.992$ & $5989.562$ & $6062.542$ & $\mathbf{5947.981}$ & $5985.055$ & $0.365$ \\
        Styblinski-Tang & $\cellcolor{cell-gray} \mathbf{-25.668}$ & $-24.339$ & $-23.746$ & $-24.191$ & $-24.578$ & $-25.150$ & $0.008$ \\
        \cdashline{1-8}[2pt/1pt]
        Avg Rank & $3.29$ & $4.00$ & $4.29$ & $3.57$ & $3.00$ & $\mathbf{2.86}$  \\
        ECR & $15.1622$ & $15.4879$ & $15.1930$ & $15.1820$ & $\mathbf{1.3467}$ & $15.4564$ \\
        \cline{1-7}
        \end{tabular}
      \end{adjustbox}
    \end{minipage}

    \begin{minipage}{0.05\textwidth}\vspace{0pt}
      \rotatebox{90}{\footnotesize Langevin dynamics}
      \vspace*{-1em}
    \end{minipage}%
    \begin{minipage}{\tabsize}\vspace{0pt}
      \vspace*{1em}
      \begin{adjustbox}{width=\linewidth}
        \begin{tabular}{l|cccccc|c}
        \thickhline
        \textbf{Benchmark} & \textbf{Langevin} & \textbf{SMD-Langevin Mean} & \textbf{SMD-Langevin} $\mathbf{M^2}$ & \textbf{SMD-Langevin Var} & \textbf{SMD-Langevin Mean+Var} & \textbf{GCN-Langevin} & \textbf{p-value} \\
        \hline
        Ackley & $\mathbf{20.723}$ & $20.767$ & $20.732$ & $20.733$ & $20.757$ & $20.760$ & $1.000$ \\
        Deb N.1 & $-0.921$ & $-0.956$ & $-1.000$ & $\cellcolor{cell-gray} \mathbf{-1.000}$ & $\cellcolor{cell-gray} \mathbf{-1.000}$ & $-0.958$ & $0.000$ \\
        Griewank & $19.362$ & $19.117$ & $18.711$ & $19.193$ & $18.992$ & $\mathbf{18.516}$ & $0.097$ \\
        Levy & $1.329$ & $1.847$ & $1.173$ & $\cellcolor{cell-gray} \mathbf{1.073}$ & $1.904$ & $1.796$ & $0.000$ \\
        Rastrigin & $184.367$ & $189.482$ & $\mathbf{183.202}$ & $184.258$ & $193.141$ & $187.751$ & $0.677$ \\
        Schwefel & $2405.830$ & $2436.136$ & $2383.227$ & $\mathbf{2324.940}$ & $2381.518$ & $2404.611$ & $0.316$ \\
        Styblinski-Tang & $-34.902$ & $-33.186$ & $\mathbf{-35.050}$ & $-34.914$ & $-32.868$ & $-33.206$ & $0.278$ \\
        \cdashline{1-8}[2pt/1pt]
        Avg Rank & $3.86$ & $5.14$ & $\mathbf{2.00}$ & $2.14$ & $4.00$ & $3.71$  \\
        ECR & $15.1944$ & $15.3272$ & $15.1613$ & $\mathbf{1.0108}$ & $1.2014$ & $15.3117$ \\
        \cline{1-7}
        \end{tabular}
      \end{adjustbox}
    \end{minipage}

    \begin{minipage}{0.05\textwidth}\vspace{0pt}
      \rotatebox{90}{\footnotesize MSGD dynamics}
      \vspace*{-1em}
    \end{minipage}%
    \begin{minipage}{\tabsize}\vspace{0pt}
      \vspace*{1em}
      \begin{adjustbox}{width=\linewidth}
        \begin{tabular}{l|cccccc|c}
        \thickhline
        \textbf{Benchmark} & \textbf{MSGD} & \textbf{SMD-MSGD Mean} & \textbf{SMD-MSGD} $\mathbf{M^2}$ & \textbf{SMD-MSGD Var} & \textbf{SMD-MSGD Mean+Var} & \textbf{GCN-MSGD} & \textbf{p-value} \\
        \hline
        Ackley & $\mathbf{19.977}$ & $20.725$ & $19.985$ & $19.982$ & $20.736$ & $20.736$ & $0.497$ \\
        Deb N.1 & $-0.567$ & $-0.834$ & $\cellcolor{cell-gray} \mathbf{-1.000}$ & $\cellcolor{cell-gray} \mathbf{-1.000}$ & $\cellcolor{cell-gray} \mathbf{-1.000}$ & $-0.920$ & $0.000$ \\
        Griewank & $19.467$ & $\mathbf{18.829}$ & $19.557$ & $20.007$ & $19.254$ & $19.358$ & $0.243$ \\
        Levy & $9.413$ & $1.905$ & $8.873$ & $9.330$ & $1.888$ & $\cellcolor{cell-gray} \mathbf{1.354}$ & $0.000$ \\
        Rastrigin & $187.689$ & $187.206$ & $193.505$ & $186.581$ & $\mathbf{183.479}$ & $183.768$ & $0.278$ \\
        Schwefel & $2396.030$ & $2424.021$ & $\mathbf{2361.875}$ & $2386.382$ & $2380.197$ & $2447.570$ & $0.488$ \\
        Styblinski-Tang & $-36.325$ & $-34.665$ & $\cellcolor{cell-gray} \mathbf{-36.379}$ & $-36.353$ & $-34.457$ & $-34.867$ & $0.042$ \\
        \cdashline{1-8}[2pt/1pt]
        Avg Rank & $4.14$ & $3.86$ & $3.00$ & $3.14$ & $\mathbf{2.86}$ & $3.57$  \\
        ECR & $16.0058$ & $15.3008$ & $1.8064$ & $1.8554$ & $\mathbf{1.1646}$ & $15.2352$ \\
        \cline{1-7}
        \end{tabular}
      \end{adjustbox}
    \end{minipage}
\end{table*}

\section{Numerical Experiments} \label{sec:numerics}

We validate our noise injection methods on multimodal benchmark functions \cite{serre2025stein,pinnau2017cbo,Malherbe2017}, comparing standard \MKV dynamics (CBO, SBS, Langevin, MSGD) against their SMD and GCN variants. Implementation uses C++ with Eigen and Python bindings\footnote{Anonymous link upon acceptance}, with Euler-Maruyama schemes from \cref{appendix:SMD_scheme,appendix:GCN_scheme}.

\paragraph{\textbf{Computational Complexity.}} SMD incurs $\mathcal{O}(N)$ complexity per iteration ($\mathcal{O}(1)$ for mean perturbation), while GCN requires $\mathcal{O}(N^3)$ for covariance matrix square root computation, making SMD significantly more efficient for large particle systems.

Each dynamic is evaluated on $7$ multimodal benchmarks in dimension $20$ with $150$ particles over $300$ iterations, averaged over $50$ runs. Parameters: stepsize $\dd t=0.1$, SBS kernel bandwidth $1/N^2$ \cite{serre2025stein}, CBO parameters $\lambda=1$, $\epsilon=10^{-2}$, $\beta=1$, $\sigma=5.1$ \cite{carrillo2021cbo}, Langevin temperature $\beta=1$, GCN Gaussian kernel bandwidth $\sigma=1$. To assess statistical significance, we perform one or several Mann-Whitney U tests depending on which type of method achieves the best result: if a vanilla dynamic achieves the best result, we compare it against all its SMD and GCN variants; if a SMD or GCN variant achieves the best result, we compare it only against the vanilla dynamic.

Best results are in \textbf{bold}. Mann-Whitney U tests at level $0.05$ are highlighted in \textcolor{gray}{gray}. We report average rank and empirical competitive ratio (ECR), $\text{ECR}(m) = \frac{1}{|F|} \sum_{f \in F} \min (100, df_m/df^\star)$, where $df_m$ is the distance to optimum for method $m$ and $df^\star$ is the best distance.

\cref{tab:multimodal-results} shows that noise injection consistently enhances baseline performance. SMD variants generally outperform GCN, with significant gains for CBO. MSGD, Langevin and SBS also benefit from noise injection, showing reliable improvements across multiple benchmarks. \cref{fig:impact_smd} shows SMD mean perturbation improves CBO and MSGD on Ackley ($d=4$). \cref{fig:impact_gcn} demonstrates GCN's robustness to kernel bandwidth $\sigma$ on Levy ($d=10$).

\paragraph{Problem-Dependent Effects.} More broadly, \cref{tab:multimodal-results} and \cref{fig:impact_smd,fig:impact_gcn} reveal that noise injection effectiveness is intimately tied to both the baseline dynamic and problem characteristics. A critical factor is the particle-to-dimension ratio. When the number of particles is sufficiently large relative to the problem dimension, the baseline dynamics converge reliably, and the added noise provides no substantial benefit (see \cref{fig:dim-low}). In fact, it may even degrade performance by introducing unnecessary perturbations. Conversely, when particles are scarce relative to dimension, the additional exploration provided by noise injection yields modest but consistent improvements by helping particles escape local minima (see \cref{fig:dim-high}). These findings suggest that reproducing our results in higher dimensions requires careful tuning of particle counts to maintain the balance observed in our $d=20$ experiments. Specifically, one should increase the number of particles proportionally to avoid regime shifts where noise becomes detrimental. Developing adaptive strategies for automatically selecting noise injection methods and hyperparameters based on the problem's intrinsic dimensionality remains an important direction for future work.

\paragraph{Unimodal Functions.} We evaluate noise injection on a set of unimodal benchmark functions in dimension $20$ with $150$ particles over $300$ iterations. The results (detailed in \cref{tab:unimodal-results}) reveal similar trends as in the multimodal case for CBO dynamics, but more mixed outcomes for gradient-aware methods. This aligns with the intuition that noise injection primarily aids exploration in multimodal landscapes. For convex functions with a single global minimum, the added stochasticity provides limited advantage to methods that already leverage gradient information effectively, and may even degrade performance by introducing unnecessary perturbations (see \cref{fig:impact_smd_square}).

\section{Conclusion}

This work addresses a key limitation of \MKV optimization methods: deterministic mean-field dynamics that hinder exploration. Viewing \MKV systems as a unifying framework for methods such as CBO and SBS, we proposed two principled ways to inject common noise: a perturbative approach based on conditional \MKV theory and a geometric approach exploiting tangent-space structure. Treating noise as a modular component makes our framework applicable to \textit{any} \MKV algorithm.
Experiments show that both strategies improve exploration and convergence. We recommend \textbf{SMD Mean+Var} as a practical default, combining strong empirical performance with $\mathcal{O}(N)$ computational cost, compared to $\mathcal{O}(N^3)$ for GCN. Future work will focus on adaptive parameter tuning and extensions beyond global optimization.


\section*{Impact Statement}

This paper presents work whose goal is to advance the field of Optimization. There are many potential societal consequences of our work, none which we feel must be specifically highlighted here.

\bibliography{biblio.bib}
\bibliographystyle{icml2026}

\newpage
\appendix
\onecolumn
\section{Simulation scheme for Stochastic Moment Dynamics}
\label{appendix:SMD_scheme}
We use an Euler-Maruyama discretization of equation \cref{eq:particle_smd_continuous_icml} to compute the evolution of the particle system. Let $(\Delta t_n )_{n\in \mathbb{N}}$ be a sequence of positive (and possibly decreasing) time step and define the discrete times $t_n = \sum_{k=0}^n\Delta t_k$.
The Euler-Maruyama scheme associated with~\cref{eq:particle_smd_continuous_icml}
is given, for each particle $i\in [N]$, by
{\small
\begin{equation}\label{eq:scheme-SMD}
\begin{aligned}
X_{n+1}^i
&={}
X_n^i
+ \Delta t_{n+1} \cdot \left[b\!\left(X_n^i, \hat{\loi}_{X_n}\right)
+ \tilde b\!\left(X_n^i, \hat{\loi}_{X_n}\right)\right] \\
& \!\!\!
+ \sqrt{\Delta t_{n+1}} \cdot \left[ \sigma\!\left(X_n^i, \hat{\loi}_{X_n}\right)\xi_n^i
+ \tilde\sigma\!\left(X_n^i, \hat{\loi}_{X_n}\right)\zeta_n \right] ,
\end{aligned}
\end{equation}
}

where $(\xi_n^i)_{i\in [N], n\ge0}$ and $(\zeta_n)_{n\ge0}$ are independent standard Gaussian increments of respective dimension $d$ and $p$.

\section{Simulation scheme for Geometric Common Noise}
\label{appendix:GCN_scheme}
We use an Euler-Maruyama discretization of equation \cref{eq:noise_svgd} to compute the evolution of the particle system. Let $(\Delta t_n )_{n\in \mathbb{N}}$ be a sequence of positive (and possibly decreasing) time steps and define the discrete times $t_n = \sum_{k=0}^n\Delta t_k$. 
Denoting by $k_{\text{noise}}$ the kernel used for Geometric Common Noise and $k$ the kernel used for SVGD (in the presentation $k=k_{\text{noise}}$ to have a canonical meaning, but this is not necessary in general), the Euler-Maruyama scheme for the GCN-noised SVGD $N$-particle dynamics is given by:
{\small
\begin{equation}\label{eq:scheme-GCN}
\begin{aligned}
\bar{X}_{n+1}
&={}
\bar{X}_n
+ \Delta t_{n+1} \cdot \bar{b}_n
+ \sqrt{\Delta t_{n+1}} \cdot \bar{K}_{\bar{X}_n}^{1/2}\bar{\xi}_n,
\end{aligned}
\end{equation}
}
where $\bar{X}_n = (X_n^1, \ldots, X_n^N) \in \mathbb{R}^{Nd}$ is the concatenated state of all particles at time $t_n$, and
\begin{align*}
&\bar{b}_n = \left(b^{\text{svgd}}(X_n^i, \hat{\loi}_{X_n})\right)_{i\in[N]}, \quad \hat{\loi}_{X_n} = \frac{1}{N}\sum_{i=1}^N \delta_{X_n^i}, \\
&b^{\text{svgd}}(\cdot,\loi) \triangleq  \mathbb{E}_{X\sim\loi} \Big[-k(\cdot,X)\,\nabla V(X)+\nabla_x k(\cdot,X) \Big], \\
&\bar{K}_{\bar{X}_n} = K_{\bar{X}_n} \otimes I_d, \quad K_{\bar{X}_n} = (k_{\text{noise}}(X_n^i,X_n^j))_{(i,j)\in[N]^2} \in \mathbb{R}^{N\times N}, \\
&\bar{\xi}_n \sim \mathcal{N}(0_{Nd},I_{Nd}) \text{ are independent standard Gaussian increments.}
\end{align*}

\noindent For other particle dynamics, GCN noise can also be incorporated by adding the term $\sqrt{\Delta t_{n+1}} \cdot \bar{K}_{\bar{X}_n}^{1/2}\bar{\xi}_n$ to the update scheme.

\section{Background: Stochastic Differential Equations and McKean-Vlasov Theory}
\label{appendix:background}

This appendix provides a self-contained introduction to stochastic differential equations (SDEs) and \MKV dynamics, establishing the theoretical foundations underlying the methods presented in this paper.

\subsection{Stochastic Differential Equations}

A stochastic differential equation (SDE) describes the evolution of a random process by combining deterministic drift with random fluctuations \cite{oksendal2013stochastic, karatzas1991Brownian}. The canonical form of an SDE in $\mathbb{R}^d$ is:
$$
\dd X_t = b(t, X_t) \dd t + \sigma(t, X_t) \dd B_t,
$$
where $X_t \in \mathbb{R}^d$ is the state at time $t$, $b: [0,\infty) \times \mathbb{R}^d \to \mathbb{R}^d$ is the drift coefficient, $\sigma: [0,\infty) \times \mathbb{R}^d \to \mathbb{R}^{d \times m}$ is the diffusion coefficient, and $(B_t)_{t \geq 0}$ is an $m$-dimensional standard Brownian motion. The solution $X_t$ is a continuous-time stochastic process whose law evolves according to the Fokker-Planck equation:
$$
\partial_t \loi_t = -\nabla \cdot (b(t,\cdot) \loi_t) + \frac{1}{2} \sum_{i,j} \partial_{x_i x_j}^2 (a_{ij}(t,\cdot) \loi_t),
$$
where $\loi_t$ denotes the probability density of $X_t$ and $a = \sigma \sigma^T$ is the diffusion matrix \cite{pavliotis2014stochastic}.

\paragraph{Langevin dynamics.} A fundamental example is the overdamped Langevin equation \cref{eq:langevin}, which arises naturally in statistical mechanics and Bayesian sampling \cite{pavliotis2014stochastic}. For a potential function $V: \mathbb{R}^d \to \mathbb{R}$ and temperature parameter $\epsilon > 0$:
$$
\dd X_t = -\nabla V(X_t) \dd t + \sqrt{2\epsilon} \dd B_t.
$$
Under suitable regularity conditions on $V$ (e.g., smoothness and growth conditions), the law $\loi_t$ of $X_t$ converges as $t \to \infty$ to the Gibbs measure:
$$
\loi_\epsilon(\dd x) \propto \exp\left(-\frac{V(x)}{\epsilon}\right) \dd x.
$$
This convergence can be characterized through the evolution of the relative entropy (KL divergence) $\text{KL}(\loi_t \| \loi_\epsilon)$, which decreases monotonically along trajectories. The Gibbs measure concentrates near the global minima of $V$ as $\epsilon \to 0$, making Langevin dynamics a natural tool for global optimization when combined with appropriate temperature scheduling (simulated annealing) \cite{kirkpatrick1983optimization}.

\subsection{McKean-Vlasov Equations and Mean-Field Limits}

\MKV equations extend classical SDEs by allowing the drift and diffusion coefficients to depend on the probability distribution of the solution itself, creating a nonlinear feedback mechanism \cite{sznitman1991topics, carmona2018probabilistic}. A \MKV SDE takes the form:
$$
\dd X_t = b(X_t, \loi_t) \dd t + \sigma(X_t, \loi_t) \dd B_t,
$$
where $\loi_t$ denotes the law of $X_t$ at time $t$, and the coefficients $b: \mathbb{R}^d \times \mathcal{P}(\mathbb{R}^d) \to \mathbb{R}^d$ and $\sigma: \mathbb{R}^d \times \mathcal{P}(\mathbb{R}^d) \to \mathbb{R}^{d \times m}$ now depend on the entire probability measure $\loi \in \mathcal{P}(\mathbb{R}^d)$.

\paragraph{Particle approximation and propagation of chaos.} In practice, \MKV dynamics are approximated through interacting particle systems. Consider $N$ particles $(X_t^1, \ldots, X_t^N)$ evolving according to:
$$
\dd X_t^i = b(X_t^i, \hat{\loi}_{X_t}) \dd t + \sigma(X_t^i, \hat{\loi}_{X_t}) \dd B_t^i, \quad i = 1, \ldots, N,
$$
where $\hat{\loi}_{X_t} = \frac{1}{N}\sum_{i=1}^N \delta_{X_t^i}$ is the empirical measure of the particle system, and $(B_t^i)_{i=1}^N$ are independent Brownian motions. Under suitable Lipschitz conditions on the coefficients, the empirical measure $\hat{\loi}_{X_t}$ converges as $N \to \infty$ to the deterministic measure $\loi_t$ satisfying the \MKV equation. This phenomenon, known as \emph{propagation of chaos} \cite{sznitman1991topics}, ensures that particles become asymptotically independent and identically distributed according to $\loi_t$.

The mean-field limit reduces the stochastic particle system to a deterministic evolution on probability space governed by:
$$
\partial_t \loi_t = -\nabla \cdot (b(\cdot, \loi_t) \loi_t) + \frac{1}{2} \sum_{i,j} \partial_{x_i x_j}^2 (a_{ij}(\cdot, \loi_t) \loi_t),
$$
where $a(\cdot, \loi) = \sigma(\cdot, \loi) \sigma(\cdot, \loi)^T$. This is a nonlinear partial differential equation in the space of probability measures \cite{carmona2018probabilistic}.

\paragraph{Common Noise in \MKV Systems.}
In standard \MKV systems, the independent Brownian motions $(B_t^i)_{i=1}^N$ average out in the mean-field limit, making the evolution of $\loi_t$ deterministic. To maintain stochasticity at the population level, one can introduce \emph{common noise}, a Brownian motion $W_t$ shared by all particles \cite{germain2025stochastic}:
$$
\dd X_t^i = b(X_t^i, \hat{\loi}_{X_t}) \dd t + \sigma(X_t^i, \hat{\loi}_{X_t}) \dd B_t^i + \tilde{\sigma}(X_t^i, \hat{\loi}_{X_t}) \dd W_t.
$$
Unlike the independent noise $\sigma \dd B_t^i$ which vanishes as $N \to \infty$, the common noise $\tilde{\sigma} \dd W_t$ persists in the mean-field limit, inducing a stochastic evolution of $\loi_t$. This enables collective transitions between basins of attraction, enhancing exploration in multimodal landscapes.

\section{Background: Random Generalized Functions}
\label{appendix:random_generalized_function}

Many stochastic models arising in machine learning, physics, and spatial statistics involve random objects that are too irregular to be defined pointwise. A convenient and rigorous framework to handle such objects is that of \emph{random generalized functions} (also called distribution-valued random variables).

Let $\mathcal{D}$ denote a space of test functions (e.g., $C_c^\infty(\mathbb{R}^d)$ or the Schwartz space $\mathcal{S}$). A generalized function is a continuous linear functional $T : \mathcal{D} \to \mathbb{R}$. A \emph{random generalized function} $X$ is a mapping such that, for every $\varphi \in \mathcal{D}$, the quantity $X(\varphi)$ is a real-valued random variable, and the map $\varphi \mapsto X(\varphi)$ is linear and continuous (in probability).

This formulation avoids pointwise evaluation and instead characterizes randomness through its action on test functions. Classical stochastic processes that fail to exist as functions—most notably Gaussian white noise—are naturally defined in this sense. For example, white noise $W$ is characterized by
\[
\mathbb{E}[W(\varphi)W(\psi)] = \langle \varphi, \psi \rangle_{L^2},
\]
and therefore defines a centered Gaussian random generalized function.

Random generalized functions play a central role in stochastic partial differential equations, Gaussian processes with singular covariance kernels, and probabilistic models involving infinite-dimensional randomness, see \cite{da2014stochastic} to go further.

\section{Proofs}
\label{appendix:proof}

\subsection{Derivation of the coefficients $\tilde{b}$ and $\tilde{\sigma}$} \label{appendix:derivationcoeffs}

The derivation of the coefficients $\tilde b$ and $\tilde \sigma$ follows from \citet[Eq. 8]{germain2025stochastic}. This formula, stated under the assumption that the observable can be expressed as a moment of the measure:
\begin{equation} \label{eq:expressionmoment}
    F(\mu)=\mu(f):=\int_{\R^d}f(x)d\mu(x)
\end{equation}

with $f:\R^d\rightarrow \R^p$, gives the expression of $\tilde{b}$ and $\tilde \sigma$ in terms of $f$, and of the coefficients $a$ and $s$:

\begin{equation} \label{eq:formulecoeffs}
         \left\{
        \begin{array}{lll}
        \tilde \sigma_{\mu}(x) &=& \nabla f (x) \big(\mu[\nabla f^T \nabla f]\big) ^{-1} s(\mu(f))\\
    
        \tilde b_{\mu}(x) &=& \nabla f (x) \big( \mu[\nabla f^T \nabla f]\big)^{-1} [a(\mu(f))-\frac{1}{2}\mu(\tilde \sigma_{\mu}\tilde \sigma_{\mu}^T :\nabla^2 f)] \,.
        \end{array}\right.
\end{equation}

where $\sigma_{\mu}\tilde \sigma_{\mu}^T :\nabla^2 f=(\sigma_{\mu}\tilde \sigma_{\mu}^T :\nabla^2 f_1, ...,\sigma_{\mu}\tilde \sigma_{\mu}^T :\nabla^2 f_p)$ is a vector of $\R^p$, and $A:B$ represents the usual Hilbert-Schmidt scalar product between two matrices of $\R^{d\times d}$.

Because $F_\text{mean}$ and $F_{s.m}$ directly fit this setting with:
\begin{equation} 
    f_\text{mean}:x\rightarrow x , \quad f_{s.m}:x\mapsto (x_1^2,..,x_d^2)
\end{equation}

and respective coefficients $a_\text{mean}=0, \quad s_\text{mean}=I_d$, and $a_{s.o}(z)=[(\frac{\delta-1/2}{z_i})]_{i\in[d]},\quad s_{s.o}=I_d$

Equations \cref{eq:mean} and \cref{eq:moment2} giving the expression of $\tilde b$ and $\tilde \sigma$ for $F_\text{mean}$ and $F_{s.m}$ follow from a direct explicit computation of equation \cref{eq:formulecoeffs}.

Observables $F_\text{var}$ and $F_\text{mean+var}$ do not directly fit this setting so one cannot directly compute equation $\tilde b$ and $\tilde \sigma$ with \cref{eq:formulecoeffs}. To do so, first assume that the particle system defined by equation \cref{eq:controlemoment} has its observable 
$F_\text{mean+var}(\hat{\loi}_{Y_t})$ satisfying the following SDE:

\begin{equation} \label{eq:systvariance}
         \left\{
        \begin{array}{lll}
        \dd F_\text{mean}(\hat{\loi}_{Y_t}) &=& \epsilon \dd W_t^1\\
    
        \dd F_\text{var}(\hat{\loi}_{Y_t}) &=& \frac{\delta -1}{F_\text{var}(\hat{\loi}_{Y_t})}\dd t+\dd W_t^2 \,.
        \end{array}\right.
\end{equation}

where $W^1$ and $W^2$ are two independent $d$ dimensional Brownian motions and $\epsilon\in{0,1}$ corresponds respectively to the control of the Variance alone and of the mean and the variance. Again, the previous equation should be understood component by component since $F_\text{var}(\hat{\loi}_{Y_t})\in \R^d$. Using the fact that $F_\text{var}(\hat{\loi}_{Y_t})=F_{s.m}(\hat{\loi}_{Y_t})-F_\text{mean}(\hat{\loi}_{Y_t})^2$, and that because of Itô's formula:

\[\dd F_\text{mean}(\hat{\loi}_{Y_t})^2=2\epsilon F_\text{mean}(\hat{\loi}_{Y_t}) \dd W_t^1+\epsilon^2\dd t \]

equation \cref{eq:systvariance} rewrites as:

\begin{equation} 
         \left\{
        \begin{array}{lll}
        \dd F_\text{mean}(\hat{\loi}_{Y_t}) &=& \epsilon \dd W_t^1\\
    
        \dd F_{s.m}(\hat{\loi}_{Y_t}) &=& 2\epsilon F_\text{mean}(\hat{\loi}_{Y_t}) \dd W_t^1+\epsilon^2\dd t+\frac{\delta -1}{(F_{s.m}(\hat{\loi}_{Y_t})-F_\text{mean}(\hat{\loi}_{Y_t})^2)}\dd t+\dd W_t^2 \,.
        \end{array}\right.
\end{equation}

which can be written as:

\begin{equation}
    \dd \hat{\loi}_{Y_t}(f)=a(\hat{\loi}_{Y_t}(f))\dd t+ s(\hat{\loi}_{Y_t}(f)) \dd W_t
\end{equation}

with $f=(f_\text{mean}, f_{s.m}):\R^d \rightarrow \R^{2d}$, $W=(W^1,W^2)$ is a $2d$ dimensional Brownian process, and for $(z_1,z_2) \in \R^{2d}$:

\[a(z_1,z_2)=\begin{pmatrix}
0 \\
\frac{\delta+1}{2(z_2-z_1^2)}+\epsilon^2
\end{pmatrix}, \quad s(z_1,z_2)=\begin{pmatrix}
\epsilon & 0 \\
2 \epsilon z_1& 1
\end{pmatrix}\]

This formulation now fits the setting of equation \cref{eq:formulecoeffs}, which can be computed explicitly. this finally gives:

\begin{equation}
    \tilde b(x,\loi) = \left( \delta-\frac{3}{2} \right)\, \left[ \frac{x-m(\rho)}{4\Var(\rho)^2} \right], \quad \tilde \sigma(x,\loi) =\left( \epsilon I_d \, , \, \text{Diag} \left[ \frac{x-m(\rho)}{2\Var(\rho)} \right]  \right)
\end{equation}

Taking $\epsilon=1$ gives the desired result for $F_\text{mean+var}$. Taking $\epsilon=0$ (which corresponds to a non perturbation of the mean of the system), and extracting the variance component gives the result for $F_\text{var}$.

\subsection{Proof of \cref{lemma:reduced_N}} \label{appendix:lemmedensite}

\begin{proof}
Let $X=(x_1,..,x_n)\in \R^{Nd}$ such that $x_i\ne x_j$ if $i\ne j$.

Define the evaluation map
\[
\mathrm{Ev}_X:\mathcal H_d\to\mathbb R^{Nd},
\qquad
\mathrm{Ev}_X(f):=\big(f(x_1),\dots,f(x_N)\big),
\]
where $f(x_i)\in\mathbb R^d$ and the concatenation is in $\mathbb R^{Nd}$.

\paragraph{Restriction of $\mathrm{Ev}_X$ to $\mathcal H_d^X$ is an isomorphism.}
Consider the restriction $\mathrm{Ev}_X:\mathcal H_d^X\to\mathbb R^{Nd}$.
First, $\mathrm{Ev}_X$ is injective on $\mathcal H_d^X$.
Indeed, let $f\in\mathcal H_d^X$ and assume $\mathrm{Ev}_X(f)=0$. Write
$f(\cdot)=\sum_{i=1}^N K(x_i,\cdot)c_i$ for some $c=(c_1,\dots,c_N)\in\mathbb R^{Nd}$ with $c_i\in\mathbb R^d$. Then for each $l\in [N]$:

\[
0=f(x_\ell)=\sum_{i=1}^N K(x_i,x_\ell)c_i,
\]
which in block form reads $(K_X\otimes I_d)c=\bar K\,c=0$. Because $K$ is strictly positive
definite and the points $(x_i)$ are distinct, then $K_X$ is invertible, hence $\bar K$ is
invertible and $c=0$, so $f=0$. Thus $\mathrm{Ev}_X$ is injective.

Since $\dim(\mathcal H_d^X)\le Nd$ and the family $\{K(x_i,\cdot)e_j\}$ spans $\mathcal H_d^X$,
we have $\dim(\mathcal H_d^X)=Nd$. Therefore $\mathrm{Ev}_X$
is bijective. In particular, for any $g=(g_1,\dots,g_N)\in\mathbb R^{Nd}$ with $g_i\in\mathbb R^d$,
there exists a unique $f\in\mathcal H_d^X$ such that $f(X)=g$, namely
\begin{equation}
\label{eq:interpolant_inverse}
f(\cdot)=\sum_{i=1}^N K(x_i,\cdot)c_i,
\qquad
c=(K_X^{-1}\otimes I_d)\,g=\bar K^{-1}g.
\end{equation}

\paragraph{Isometry after equipping $\mathbb R^{Nd}$ with the kernel-induced norm.}
Define on $\mathbb R^{Nd}$ the inner product
\[
\langle g,h\rangle_{\bar K^{-1}} := g^\top \bar K^{-1}h,
\qquad
\|g\|_{\bar K^{-1}}^2 := g^\top \bar K^{-1}g.
\]
Then $\mathrm{Ev}_X:\mathcal H_d^X\to(\mathbb R^{Nd},\langle\cdot,\cdot\rangle_{\bar K^{-1}})$
is an isometry. Indeed, let $f\in\mathcal H_d^X$ and write $f=\sum_i K(x_i,\cdot)c_i$.
Using the product structure of $\mathcal H_d$ and the reproducing kernel calculus,
\[
\|f\|_{\mathcal H_d}^2
=
\left\langle \sum_{i}K(x_i,\cdot)c_i,\ \sum_{\ell}K(x_\ell,\cdot)c_\ell\right\rangle_{\mathcal H_d}
=
\sum_{i,\ell} c_i^\top c_\ell\,K(x_i,x_\ell)
=
c^\top (K_X\otimes I_d)c
=
c^\top \bar K\,c.
\]
On the other hand $g:=f(X)=\bar K\,c$, hence $c=\bar K^{-1}g$, and therefore
\begin{equation}
\label{eq:norm_identity}
\|f\|_{\mathcal H_d}^2
=
(\bar K^{-1}g)^\top \bar K\,(\bar K^{-1}g)
=
g^\top \bar K^{-1}g
=
\|g\|_{\bar K^{-1}}^2.
\end{equation}
This proves that $\|f\|_{\mathcal H_d}=\|\mathrm{Ev}_X(f)\|_{\bar K^{-1}}$ for all
$f\in\mathcal H_d^X$.

\paragraph{Reference (Lebesgue) measure on $\mathcal H_d^X$.}
Since $\mathcal H_d^X$ is a finite-dimensional Hilbert space, the inner product
$\langle\cdot,\cdot\rangle_{\mathcal H_d}$ induces a canonical volume measure, defined as the
Lebesgue measure in any orthonormal coordinate system. Concretely, choose an orthonormal
basis $(u_1,\dots,u_{Nd})$ of $\mathcal H_d^X$ and define, for Borel sets $A\subset\mathcal H_d^X$,
\[
\mu_{\mathcal H_d^X}(A)
:=
\lambda_{Nd}\big(\Phi(A)\big),
\qquad
\Phi(f):=\big(\langle f,u_1\rangle_{\mathcal H_d},\dots,\langle f,u_{Nd}\rangle_{\mathcal H_d}\big),
\]
where $\lambda_{Nd}$ is the usual Lebesgue measure on $\mathbb R^{Nd}$. This definition does
not depend on the chosen orthonormal basis.
Equivalently, using the isometry \cref{eq:norm_identity}, $\mu_{\mathcal H_d^X}$ is the
pullback by $\mathrm{Ev}_X$ of the Lebesgue measure on $\mathbb R^{Nd}$ associated with the
inner product $\langle\cdot,\cdot\rangle_{\bar K^{-1}}$, i.e.
\[
\mu_{\mathcal H_d^X}
=
(\mathrm{Ev}_X)^{-1}_{\#}\lambda_{\bar K^{-1}},
\]
where $\lambda_{\bar K^{-1}}$ denotes the Lebesgue measure on $\mathbb R^{Nd}$ in an orthonormal
basis for $\langle\cdot,\cdot\rangle_{\bar K^{-1}}$.
In standard Euclidean coordinates $g\in\mathbb R^{Nd}$, one has
\[
\mathrm d\lambda_{\bar K^{-1}}(g)
=
(\det \bar K)^{-1/2}\,\mathrm dg,
\]
since an orthonormalization amounts to the linear change of variables $z=\bar K^{-1/2}g$.

\paragraph{Finite-dimensional Gaussian density (proof of Lemma).}
Let $G(X)\sim\mathcal N(0,\bar K)$. Its density with respect to the standard Lebesgue measure
$\mathrm dg$ on $\mathbb R^{Nd}$ is
\[
p_{G(X)}(g)
=
\frac{1}{(2\pi)^{Nd/2}(\det\bar K)^{1/2}}
\exp\!\left(-\tfrac12\, g^\top \bar K^{-1} g\right).
\]
By \cref{eq:norm_identity}, for $f\in\mathcal H_d^X$ and $g=\mathrm{Ev}_X(f)$,
$g^\top\bar K^{-1}g=\|f\|_{\mathcal H_d}^2$. Moreover, since $\mathrm{Ev}_X$ is an isometry
between $\mathcal H_d^X$ and $(\mathbb R^{Nd},\langle\cdot,\cdot\rangle_{\bar K^{-1}})$, the
change-of-variables formula yields
\[
\mathbb P(\pi_X G\in \mathrm df)
=
p_{G(X)}(\mathrm{Ev}_X(f))\,\mathrm d\lambda_{\bar K^{-1}}(\mathrm{Ev}_X(f))
\propto
\exp\!\left(-\tfrac12\|f\|_{\mathcal H_d}^2\right)\,\mathrm d\mu_{\mathcal H_d^X}(f),
\]
which is the claimed statement.

\end{proof}

\subsection{Proof of \Cref{lemma:SDE_to_consider}}

Start from the following equation:

\begin{equation} \label{eq:iterationrho}
    \hat{\loi}_{(n+1)\epsilon}=(id+\epsilon v_{n\epsilon}+\sqrt{\epsilon}G_{n})\#\hat{\loi}_{n\epsilon},\quad G_n \sim \mathcal{G}(\beta^2 K_{SVGD})
\end{equation}

This equation can be interpreted as a noisy version of the discretized in time continuity equation for the SVGD metric.

\[\partial_t\rho_t+\nabla \cdot(\rho_t v_t)=0\]

As it is classical within PDE theory, evolutions of this type are usually discretized in space thanks to the use of particle systems.  First approximate $\hat{\rho}_0$ by the empirical distribution of a particle system $\bar{\rho}_0^\epsilon:=\frac{1}{N}\sum_{i=1}^N\delta_{X_0^{\epsilon,i}}$. Then, the iteration \cref{eq:iterationrho} initialized with $\bar \rho_0$ rewrites as an iteration on the particles:

\begin{equation}  \label{eq:dynamiquediscrtiseetempsrandom}
    X_{n+1}^{\epsilon,i}=\epsilon v_n(X_{n}^{\epsilon,i})+\sqrt{\epsilon}G_{n}(X_{n}^{\epsilon,i})
\end{equation}

The iteration \cref{eq:iterationrho} initialized with $\hat \rho_{0}$ is then approximated with:

\[\hat \rho_{n\epsilon}\approx \bar \rho_{n\epsilon}=\frac{1}{N}\sum_{i=1}^N\delta_{X_n^{\epsilon,i}}\]

We now want to derive a continuous time version of \cref{eq:dynamiquediscrtiseetempsrandom}, obtained when $\epsilon\rightarrow 0$. To do so, remark that the vector $G_n(X_n^\epsilon):=(G_{n}(X_{n}^{\epsilon,1}), ..., G_{n}(X_{n}^{\epsilon,N}))\in \R^{Nd}$ is Gaussian, and its covariance matrix is given by:

\[C_\beta(X_n^\epsilon)=\beta^2
\bar K(X_n^\epsilon), \quad K(X_n^\epsilon) \;=\;
\begin{pmatrix}
K_{1,1} &  \cdots & K_{1,N} \\
\vdots &   \ddots & \vdots \\
K_{N,1} &  \cdots & K_{N,N}
\end{pmatrix},
\qquad
K_{i,j}=k(x_i,x_j) I_d.
\]

which can be reformulated as :

\begin{equation}
    \bar K(X)=  K(X)\otimes I_d, \quad  K(X)_{i,j}=k(X^i,X^j) 
\end{equation}

Then, using the theory of Gaussian vectors, \cref{eq:dynamiquediscrtiseetempsrandom} can be expressed in a matrix form by:

\begin{equation} \label{eq:edsdiscretisée}
    X_{n+1}^{\epsilon}=\epsilon v_n(X_{n}^{\epsilon})+\sqrt{\epsilon \beta^2\bar K(X_{n}^{\epsilon})} \xi_{n+1}
\end{equation}
where $(\xi_{n})_{n\in \mathbb{N}}$ are i.i.d standard Gaussian variables of dimension $Nd$.

One can identify equation \cref{eq:edsdiscretisée} as Euler-Maruyama discretization of the following SDE:

\begin{equation} \label{eq:edssystparticules}
    dX_t=v_t(X_{t})\dd t+\beta\sqrt{\bar K(X_t)} \dd B_t
\end{equation}

The equations \cref{eq:edssystparticules}, \cref{eq:dynamiquediscrtiseetempsrandom}  and\cref{eq:edsdiscretisée} correspond respectively to to the continuous in time/ discretized in space; continuous in space/ discretized in time and discretized in time/ discretized in space version of the underlying continuous in time/ continuous in space dynamic \cref{eq:spde_noisy}.

\subsection{Proof of \cref{theorem:link}}
\begin{proof}
We have that for any $X\in \Rset^{Nd}$, $K_X\to \bar{J}$ where $\bar{J}=J\otimes I_d$ where $J=(1)_{i,j\in [N^2]}$ is the matrix with only 1 values, thus $K_X^{1/2}\to \bar{J}/\sqrt{N}$ since $\bar{J}^2=N\bar{J}$. Therefore in the case $\sigma \to \infty$, we have $k_\sigma(x,y)=1$, then by \cref{eq:G_t_N_bar}, $\bar{G}_t^N=(\sum_{i=1}^N B_t^i/\sqrt{N})_{j\in [N]}$  meaning that $G_t(X_j^N)=Z$ for any $j\in[N]$ with $Z \sim \mathcal{N}(0,I_d)$. In other words, when $\sigma \to \infty $, GCN is SMD-mean.

In the case, $\sigma \to 0$, $k_\sigma(x,y)^{(d)}\to \delta_0(x-y)$, meaning that $G\sim \mathcal{G}(k)$ satisfies $\mathbb{E}(G(x)G(y))=\delta_0(x-y)$ which defines a white noise.
\end{proof}
\subsection{Proof of \cref{thm:limit-theorem}}
We take the same starting point as the proof of Proposition 2 in \cite{duncan2023geometry}.
Let $\phi \in C_c^{\infty}\left([0, \infty) \times \mathbb{R}^d\right)$ be a smooth test function with compact support and define $\Phi \in C_c^{\infty}\left([0, \infty) \times \mathbb{R}^{N d}\right)$ by $\Phi(t, x):=\frac{1}{N} \sum_{i=1}^N \phi\left(t, x_i\right)$ for any $x=(x_1,\ldots,x_N)\in \Rset^{Nd}$. Denoting by for any $\tilde{x}\in \Rset^d$ and $\rho \in \mathcal{P}(\Rset^d)$,

$$
b(\tilde{x}, \rho)=\mathbb{E}_{X\sim\loi}
\Big[
-k(\tilde{x},X)\,\nabla V (X)+\nabla_x k(\tilde{x},X)
\Big]=\int_{\mathbb{R}^d}\left[-k(\tilde{x}, y) \nabla V(y)+\nabla_y k(\tilde{x}, x)\right] \mathrm{d} \rho(y)
$$

Let $(\bar{X}_t)_{t\geq 0}$ a solution of \cref{eq:noise_svgd}, Itô's formula implies

$$
\begin{aligned}
\mathrm{d} \Phi\left(t, \bar{X}_t\right)= & \frac{1}{N} \sum_{i=1}^N\left(\partial_t \phi\left(t, X_t^i\right)+\nabla \phi\left(t, X_t^i\right) \cdot b\left(X_t^i, \rho_t^N\right)\right) \mathrm{d} t+ \beta^2\operatorname{Tr}\left(\bar{K}_{\bar{X}_t} \operatorname{Hess} \Phi\left(t,\bar{X}_t\right)\right) \mathrm{d} t \\
& +\frac{\beta}{N} \sum_{i, j=1}^N \nabla \phi\left(t,X_t^i\right) \cdot \left[\sqrt{\bar{K}_{\bar{X}_t} }\right]_{i j} \mathrm{~d} W_t^j
\end{aligned}
$$
where $ \left[\sqrt{\bar{K}_{\bar{X}_t} }\right]_{i j}\in \Rset^{d\times d}$ is the matrix block in line $i$ and column $j$ in $\sqrt{\bar{K}_{\bar{X}_t} }$ and $(W_t=(W_t^1,\ldots, W_t^N))_{t\geq 0}$ is a $Nd$-dimensional Brownian motion.
The Hessian Hess $\Phi \in \mathbb{R}^{N d \times N d}$ consists of $N^2$ blocks of size $d \times d$ with

$$
[\operatorname{Hess} \Phi(t,x)]_{i j}=\left\{\begin{array}{ll}
\frac{1}{N} \operatorname{Hess} \phi\left(t,x_i\right) & \text { if } i=j \\
0 & \text { otherwise }
\end{array}, \quad(i, j) \in\{1, \ldots, N\}^2\right.
$$

so that it is a block diagonal matrix, with each diagonal block containing the Hessian of $\phi$.
A simple calculation yields that

$$
\operatorname{Tr}(\bar{K}_{x} \operatorname{Hess} \Phi(t,x))=\frac{1}{N} \sum_{i=1}^N k\left(x_i, x_i\right) \operatorname{Tr}\left(\operatorname{Hess} \phi\left(t,x_i\right)\right)=\frac{1}{N} \sum_{i=1}^N k\left(x_i, x_i\right) \Delta_{x_i} \phi\left(t,x_i\right),
$$

so that

$$
\operatorname{Tr}\left(\mathcal{K}\left(\bar{X}_t\right) \operatorname{Hess} \Phi\left(t,\bar{X}_t\right)\right)= \int_{\mathbb{R}^d} k(x, x) \Delta_x \phi(t,x) \mathrm{d} \rho_t^N(x)
$$

Denoting by $\rho_s^N=\sum_{i=1}^N \delta_{X_s^i}/N$, it follows that

$$
\begin{aligned}
\left\langle\phi(t, \cdot), \rho_t^N\right\rangle-\left\langle\phi(0, \cdot), \rho_0^N\right\rangle= & \int_0^t\left\langle\partial_s \phi(s, \cdot), \rho_s^N\right\rangle \mathrm{d} s+\int_0^t\left\langle\nabla \phi(s, \cdot) \cdot b\left(\cdot, \rho_s^N\right), \rho_s^N\right\rangle \mathrm{d} s \\
& + \beta^2\int_0^t\left\langle k(\cdot, \cdot) \Delta_x \phi(s,\cdot), \rho_s^N\right\rangle \mathrm{d} s+\beta\mathcal{N}_t
\end{aligned}
$$
where the brackets denote the duality pairing between test functions and measures.
The term $\mathcal{N}_t$ represents a local martingale with quadratic variation

$$
\begin{aligned}
{[\mathcal{N} ., \mathcal{N} .]_t } & =\frac{1}{N^2} \int_0^t \sum_{i, j=1}^N \nabla \phi\left(s,X_s^i\right) \cdot \mathcal{K}\left(\bar{X}_s\right)_{i j} \nabla \phi\left(s,X_s^j\right) \mathrm{d} s \\
& =\frac{1}{N^2} \int_0^t \sum_{i, j=1}^N \nabla \phi\left(s,X_s^i\right) \cdot \nabla \phi\left(s,X_s^j\right) k\left(X_s^i, X_s^j\right) \mathrm{d} s \\
& = \int_0^t \int_{\mathbb{R}^d} \int_{\mathbb{R}^d} \nabla \phi(s,y) \cdot \nabla \phi(s,z) k(y, z) \mathrm{d} \rho_s^N(y) \mathrm{d} \rho_s^N(z) \mathrm{d} s
\end{aligned}
$$

In particular, assuming that the family $\left\{\rho^N_\cdot: N \in \mathbb{N}\right\}$ possesses a limit point in $\mathcal{P}(C[0, T])$ denoted by $\rho^*_\cdot$, it follows that 
$$
    [\mathcal{N}^* ., \mathcal{N}^* .]_t=  \int_0^t \int_{\mathbb{R}^d} \int_{\mathbb{R}^d} \nabla \phi(s,y) \cdot \nabla \phi(s,z) k(y, z) \mathrm{d} \rho_s^*(y) \mathrm{d} \rho_s^*(z) \mathrm{d} s.
$$
$\mathcal{N}^*$ is uniquely defined by its quadratic variation, such that we have
\begin{equation}
    \mathcal{N}^*_t=\int_0^t \langle \nabla \phi(s,\cdot)\cdot \dd G(s,\cdot),\rho_s^*\rangle 
\end{equation}
where $\ G\sim \mathcal{TG}(K_{\text{SVGD}})$.
At the limit,
\begin{align*}
\left\langle\phi(t, \cdot), \rho_t^*\right\rangle-\left\langle\phi(0, \cdot), \rho_0^*\right\rangle= & \int_0^t\left\langle\partial_s \phi(s, \cdot), \rho_s^*\right\rangle \mathrm{d} s+\int_0^t\left\langle\nabla \phi(s, \cdot) \cdot b\left(\cdot, \rho_s^*\right), \rho_s^*\right\rangle \mathrm{d} s \\
& + \beta^2\int_0^t\left\langle k(\cdot, \cdot) \Delta_x \phi(s,\cdot), \rho_s^*\right\rangle \mathrm{d} s+ \beta\mathcal{N}_t^* 
\end{align*}
$\rho_s^*$ is thus solution of the following equation in the weak sense
\begin{equation}
    \partial_t \rho_s^* +\divergence \left(\left(b(\cdot,\rho_s^*) +\beta \dd G(s,\cdot)\right)\rho_s^* \right)  +\beta^2 \Delta_s (k(\cdot,\cdot) \rho_s^*) =0, \quad G\sim \mathcal{TG}(K_{\text{SVGD}})
\end{equation}
This concludes the proof for \cref{eq:spde_noisy}.

Regarding \cref{eq:schem_info_svgd}, the proof proceeds from the following computation
\begin{equation}
\int_0^\epsilon \dd G_i(s,\cdot)=\sum_{\vec{n}\in \Nset^d} \int_0^\epsilon \lambda_{\vec{n}}\phi_{\vec{n}}\dd \xi_{\vec{n},t}^i=\sum_{\vec{n}\in \Nset^d}  \lambda_{\vec{n}}\phi_{\vec{n}} \int_0^\epsilon \dd \xi_{\vec{n},t}^i=\sqrt{\epsilon} \sum_{\vec{n}\in \Nset^d} \lambda_{\vec{n}}\phi_{\vec{n}}  \xi_{\vec{n}}^i=\sqrt{\epsilon} g, \quad \xi_{\vec{n}}^i\overset{i.i.d}{\sim} \mathcal{N}(0,1)
\end{equation}
where $g\sim \mathcal{G}(K_{\text{SVGD}})$ by \cref{remark:decomposition_gcn} and by \cref{eq:discretization_eq_con}.



\section{Additional Numerical Experiments}

\begin{figure}[H]
\centering
\begin{subfigure}{0.45\textwidth}
    \includegraphics[width=\textwidth]{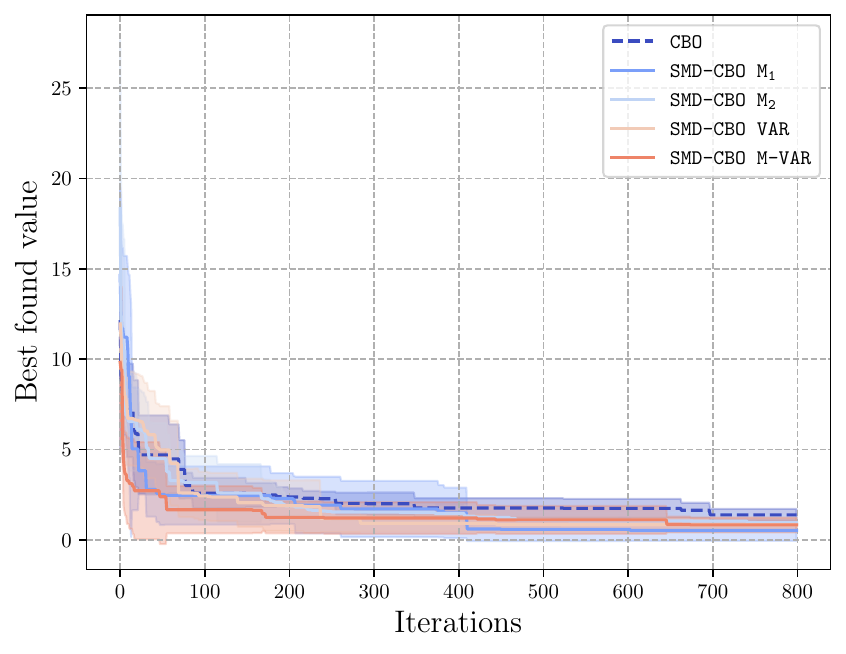}
    \caption{Impact of SMD on CBO dynamics on Rastrigin function in dimension $2$ with $10$ particles.}
    \label{fig:dim-low}
\end{subfigure}
\hfill
\begin{subfigure}{0.45\textwidth}
    \includegraphics[width=\textwidth]{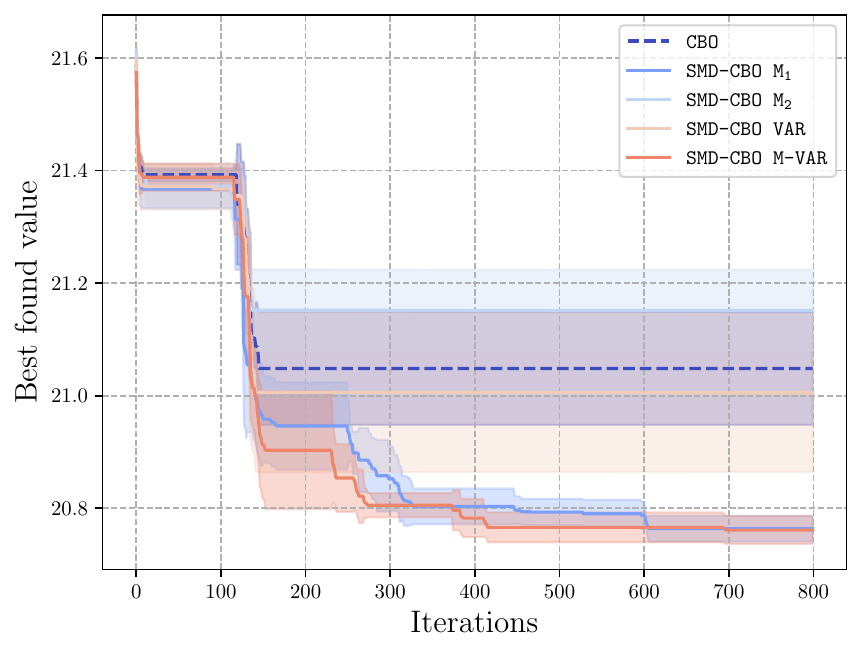}
    \caption{Impact of SMD on CBO dynamics on Ackley function in dimension $200$ with $10$ particles.}
    \label{fig:dim-high}
\end{subfigure}
  \caption{Illustration of the problem-dependent effects of noise injection methods in the multimodal setting. The mean (line) and standard deviation (shaded area) of the best found value are reported over $5$ runs. In low dimension with sufficient particles (left), the effect of SMD is negligible, while in very high dimension with scarce particles (right), SMD yields modest improvements.}
\end{figure}

\begin{figure}[h]
\centering
\begin{subfigure}{0.45\textwidth}
    \includegraphics[width=\textwidth]{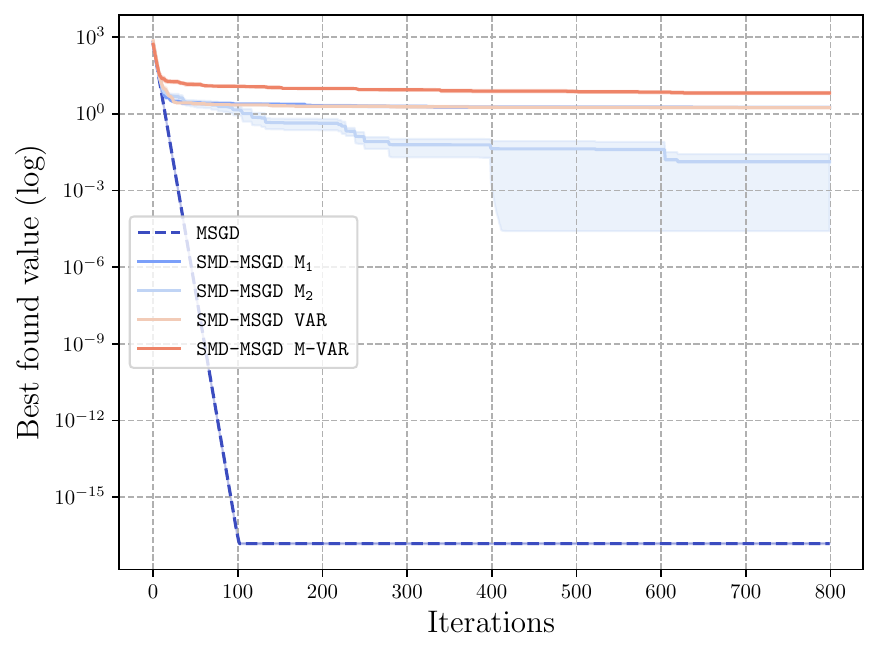}
    \caption{MSGD dynamics}
\end{subfigure}
\hfill
\begin{subfigure}{0.45\textwidth}
    \includegraphics[width=\textwidth]{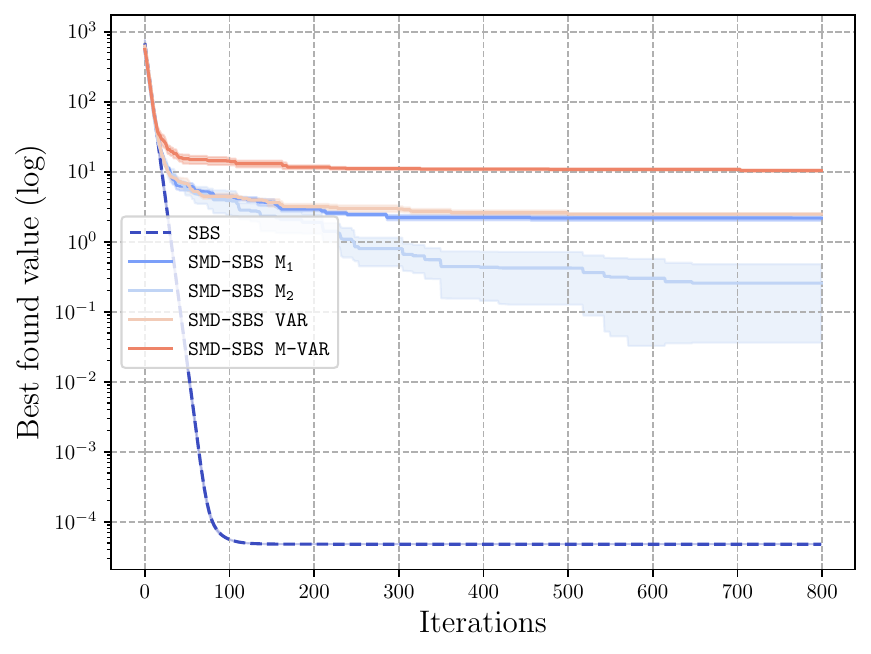}
    \caption{SBS dynamics}
\end{subfigure}
\caption{Impact of perturbed observables on MSGD (left) and SBS dynamics (right) on the Square function in dimension $20$ with $2$ particles. The mean (line) and standard deviation (shaded area) of the best found value are reported over $5$ runs. In this unimodal setting, the noise injection slightly degrades performance by introducing unnecessary perturbations.}
\label{fig:impact_smd_square}
\end{figure}

\begin{table*}[h]
  \caption{Comparison between vanilla dynamics and their SMD and GCN variants on $8$ unimodal benchmark functions in dimension $20$ using $150$ particles over $300$ iterations. For each dynamic, the average best result is reported. The average rank over all benchmarks along with the ECR is reported at the bottom of each table. The maximum p-value of Mann-Whitney U tests is reported in the last column. The best results are in bold and cells where the Mann-Whitney U test is significant at level $0.05$ are highlighted in gray.}
  \label{tab:unimodal-results}
    \centering
    \begin{minipage}{0.05\textwidth}\vspace{0pt}
      \rotatebox{90}{\footnotesize CBO dynamics}
      \vspace*{-1em}
    \end{minipage}%
    \begin{minipage}{\tabsize}\vspace{0pt}
      \begin{adjustbox}{width=\columnwidth}
        \begin{tabular}{l|cccccc|c}
        \thickhline
        \textbf{Benchmark} & \textbf{CBO} & \textbf{SMD-CBO Mean} & \textbf{SMD-CBO} $\mathbf{M^2}$ & \textbf{SMD-CBO Var} & \textbf{SMD-CBO Mean+Var} & \textbf{GCN-CBO} & \textbf{p-value} \\
        \hline
        Bent cigar & $30623584067.892$ & $\mathbf{29682175177.256}$ & $31818328261.401$ & $31057209405.099$ & $30240038880.044$ & $30457405283.166$ & $0.281$ \\
        Dixon-Price & $469983.927$ & $\cellcolor{cell-gray} \mathbf{371918.118}$ & $442706.838$ & $461788.187$ & $380525.484$ & $378987.734$ & $0.000$ \\
        Hyper-Ellipsoid & $814.670$ & $\cellcolor{cell-gray} \mathbf{714.377}$ & $806.037$ & $765.882$ & $734.561$ & $729.761$ & $0.000$ \\
        Rosenbrock & $488088.345$ & $378862.224$ & $498337.068$ & $473975.528$ & $371117.547$ & $\cellcolor{cell-gray} \mathbf{351728.228}$ & $0.000$ \\
        Square & $336.981$ & $306.784$ & $335.871$ & $317.282$ & $304.169$ & $\cellcolor{cell-gray} \mathbf{301.648}$ & $0.001$ \\
        Sumpow & $41.136$ & $\cellcolor{cell-gray} \mathbf{9.806}$ & $34.587$ & $30.481$ & $10.263$ & $10.263$ & $0.000$ \\
        Trid & $-0.005$ & $-0.006$ & $-0.005$ & $-0.004$ & $-0.005$ & $\cellcolor{cell-gray} \mathbf{-0.006}$ & $0.022$ \\
        Zakharov & $635.966$ & $\cellcolor{cell-gray} \mathbf{585.580}$ & $664.924$ & $597.844$ & $605.961$ & $606.866$ & $0.011$ \\
        \cdashline{1-8}[2pt/1pt]
        Avg Rank & $5.25$ & $\mathbf{1.62}$ & $5.25$ & $4.25$ & $2.50$ & $2.12$  \\
        ECR & $1.5283$ & $\mathbf{1.0119}$ & $1.4488$ & $1.3621$ & $1.0271$ & $1.0187$ \\
        \cline{1-7}
        \end{tabular}
      \end{adjustbox}
    \end{minipage}
    \vspace*{0.8em}
    
    \begin{minipage}{0.05\textwidth}\vspace{0pt}
      \rotatebox{90}{\footnotesize SBS dynamics}
      \vspace*{-1em}
    \end{minipage}%
    \begin{minipage}{\tabsize}\vspace{0pt}
      \begin{adjustbox}{width=\columnwidth}
        \begin{tabular}{l|cccccc|c}
        \thickhline
        \textbf{Benchmark} & \textbf{SBS} & \textbf{SMD-SBS Mean} & \textbf{SMD-SBS} $\mathbf{M^2}$ & \textbf{SMD-SBS Var} & \textbf{SMD-SBS Mean+Var} & \textbf{GCN-SBS} & \textbf{p-value} \\
        \hline
        Bent cigar & $32005162588.056$ & $31363252573.409$ & $31743875639.769$ & $31890761748.545$ & $31987545635.464$ & $\mathbf{31146357732.286}$ & $0.206$ \\
        Dixon-Price & $444531.288$ & $438354.660$ & $455444.623$ & $459341.343$ & $\mathbf{434068.195}$ & $456453.507$ & $0.627$ \\
        Hyper-Ellipsoid & $\mathbf{2.230}$ & $202.142$ & $2.725$ & $5.652$ & $207.690$ & $130.216$ & $0.570$ \\
        Rosenbrock & $\mathbf{444772.184}$ & $453814.436$ & $513107.614$ & $487215.524$ & $463959.093$ & $502791.601$ & $0.879$ \\
        Square & $153.365$ & $193.608$ & $152.775$ & $\mathbf{152.288}$ & $175.262$ & $175.231$ & $0.975$ \\
        Sumpow & $0.260$ & $1.743$ & $0.002$ & $\cellcolor{cell-gray} \mathbf{7e^{-4}}$ & $0.039$ & $0.160$ & $0.000$ \\
        Trid & $0.004$ & $0.001$ & $0.002$ & $0.002$ & $0.001$ & $\cellcolor{cell-gray} \mathbf{-8e^{-5}}$ & $0.000$ \\
        Zakharov & $1018737.046$ & $450180.203$ & $378128.839$ & $612121.643$ & $734513.842$ & $\mathbf{62513.402}$ & $0.293$ \\
        \cdashline{1-8}[2pt/1pt]
        Avg Rank & $3.88$ & $3.62$ & $3.25$ & $3.38$ & $3.75$ & $\mathbf{3.12}$  \\
        ECR & $15.2971$ & $25.3978$ & $\mathbf{1.8410}$ & $2.3145$ & $20.2180$ & $20.5921$ \\
        \cline{1-7}
        \end{tabular}
      \end{adjustbox}
    \end{minipage}
    \vspace*{0.8em}

    \begin{minipage}{0.05\textwidth}\vspace{0pt}
      \rotatebox{90}{\footnotesize Langevin dynamics}
      \vspace*{-1em}
    \end{minipage}%
    \begin{minipage}{\tabsize}\vspace{0pt}
      \begin{adjustbox}{width=\columnwidth}
        \begin{tabular}{l|cccccc|c}
        \thickhline
        \textbf{Benchmark} & \textbf{Langevin} & \textbf{SMD-Langevin Mean} & \textbf{SMD-Langevin} $\mathbf{M^2}$ & \textbf{SMD-Langevin Var} & \textbf{SMD-Langevin Mean+Var} & \textbf{GCN-Langevin} & \textbf{p-value} \\
        \hline
        Bent cigar & $32535447108.573$ & $\mathbf{31621953123.250}$ & $32072428247.128$ & $31976284652.507$ & $32565581749.156$ & $31819674299.149$ & $0.224$ \\
        Dixon-Price & $451898.516$ & $454002.991$ & $468710.179$ & $460187.324$ & $\mathbf{443581.877}$ & $481332.550$ & $0.603$ \\
        Hyper-Ellipsoid & $828.543$ & $825.205$ & $\cellcolor{cell-gray} \mathbf{779.643}$ & $798.520$ & $812.076$ & $803.315$ & $0.033$ \\
        Rosenbrock & $490079.993$ & $\mathbf{459736.311}$ & $482284.302$ & $464262.042$ & $493496.166$ & $494916.053$ & $0.372$ \\
        Square & $\cellcolor{cell-gray} \mathbf{0.979}$ & $1.877$ & $1.376$ & $1.367$ & $2.545$ & $1.979$ & $0.000$ \\
        Sumpow & $6e^{-5}$ & $3e^{-5}$ & $1e^{-4}$ & $\cellcolor{cell-gray} \mathbf{1e^{-6}}$ & $2e^{-5}$ & $5e^{-5}$ & $0.000$ \\
        Trid & $-2e^{-4}$ & $1e^{-4}$ & $-5e^{-4}$ & $-2e^{-4}$ & $-3e^{-4}$ & $\cellcolor{cell-gray} \mathbf{-7e^{-4}}$ & $0.038$ \\
        Zakharov & $\mathbf{109659.961}$ & $544154.097$ & $406593.891$ & $394359.838$ & $920400.543$ & $788568.950$ & $1.000$ \\
        \cdashline{1-8}[2pt/1pt]
        Avg Rank & $3.62$ & $3.38$ & $3.38$ & $\mathbf{2.50}$ & $4.12$ & $4.00$  \\
        ECR & $6.2152$ & $4.4340$ & $10.4760$ & $\mathbf{1.3848}$ & $4.3054$ & $6.5461$ \\
        \cline{1-7}
        \end{tabular}
      \end{adjustbox}
    \end{minipage}
    \vspace*{0.8em}

    \begin{minipage}{0.05\textwidth}\vspace{0pt}
      \rotatebox{90}{\footnotesize MSGD dynamics}
      \vspace*{-1em}
    \end{minipage}%
    \begin{minipage}{\tabsize}\vspace{0pt}
      \begin{adjustbox}{width=\columnwidth}
        \begin{tabular}{l|cccccc|c}
        \thickhline
        \textbf{Benchmark} & \textbf{MSGD} & \textbf{SMD-MSGD Mean} & \textbf{SMD-MSGD} $\mathbf{M^2}$ & \textbf{SMD-MSGD Var} & \textbf{SMD-MSGD Mean+Var} & \textbf{GCN-MSGD} & \textbf{p-value} \\
        \hline
        Bent cigar & $31678046302.237$ & $31462840710.309$ & $31473260670.919$ & $32549106519.774$ & $32133802466.087$ & $\mathbf{30786379896.381}$ & $0.216$ \\
        Dixon-Price & $\mathbf{440543.067}$ & $453061.646$ & $459649.761$ & $444971.192$ & $450100.101$ & $453917.746$ & $1.000$ \\
        Hyper-Ellipsoid & $794.589$ & $785.260$ & $798.768$ & $789.540$ & $801.574$ & $\mathbf{773.490}$ & $0.471$ \\
        Rosenbrock & $501435.007$ & $474768.154$ & $\mathbf{458605.030}$ & $481823.429$ & $470479.892$ & $480760.075$ & $0.155$ \\
        Square & $\cellcolor{cell-gray} \mathbf{3e^{-18}}$ & $1.789$ & $5e^{-11}$ & $0.005$ & $1.549$ & $1.000$ & $0.000$ \\
        Sumpow & $3e^{-5}$ & $0.012$ & $\cellcolor{cell-gray} \mathbf{0}$ & $\cellcolor{cell-gray} \mathbf{0}$ & $\cellcolor{cell-gray} \mathbf{0}$ & $9e^{-5}$ & $0.000$ \\
        Trid & $0.003$ & $0.001$ & $0.002$ & $0.002$ & $9e^{-4}$ & $\cellcolor{cell-gray} \mathbf{-3e^{-4}}$ & $0.000$ \\
        Zakharov & $555551.173$ & $\cellcolor{cell-gray} \mathbf{253587.981}$ & $508868.077$ & $395610.144$ & $306451.213$ & $450211.111$ & $0.030$ \\
        \cdashline{1-8}[2pt/1pt]
        Avg Rank & $4.00$ & $3.38$ & $3.50$ & $3.38$ & $3.25$ & $\mathbf{3.12}$  \\
        ECR & $13.5453$ & $25.7640$ & $\mathbf{1.2142}$ & $13.4638$ & $13.4180$ & $25.8568$ \\
        \cline{1-7}
        \end{tabular}
      \end{adjustbox}
    \end{minipage}
\end{table*}


\end{document}